\theoremstyle{plain}
\newtheorem{theorem}{Theorem}[section]
\newtheorem{lemma}[theorem]{Lemma}
\newtheorem{corollary}[theorem]{Corollary}
\theoremstyle{definition}
\newtheorem{definition}[theorem]{Definition}
\declaretheorem[name=Example,numbered=no,style=definition,qed={\lower-0.3ex\hbox{$\triangle$}}]{example}
\declaretheorem[name=Remark,sibling=theorem,style=remark,qed={\lower-0.3ex\hbox{$\Diamond$}}]{remark}
\DeclareMathOperator{\sech}{sech}
\newcommand{\Z}{\mathbbm Z}
\newcommand{\R}{\mathbbm R}
\newcommand{\Stwo}{\mathbbm S^2}
\newcommand{\spann}{\mathop{\mathrm{span}}}
\newcommand{\qOne}{\ensuremath{\mathbbm{1}}}
\newcommand{\Quat}{\mathbbm{H}}
\newcommand{\quadmatrix}[4]{\left(\begin{array}{cc}#1&#2\\#3&#4\end{array}\right)}
\newcommand{\qIm}[1]{ \left[ #1 \right]^{\mathrm{tr} = 0}}
\author{Tim Hoffmann\thanks{This research was supported by the DFG-Collaborative Research Center, TRR 109, ``Discretization in Geometry and Dynamics.''} \, and \,Andrew O. Sageman-Furnas}
\title{A $2\times 2$ Lax representation, associated family, and B\"acklund transformation for circular K-nets}
\begin{document}
\maketitle

\begin{figure}[h]
  \centering
  \includegraphics[width=.7\hsize]{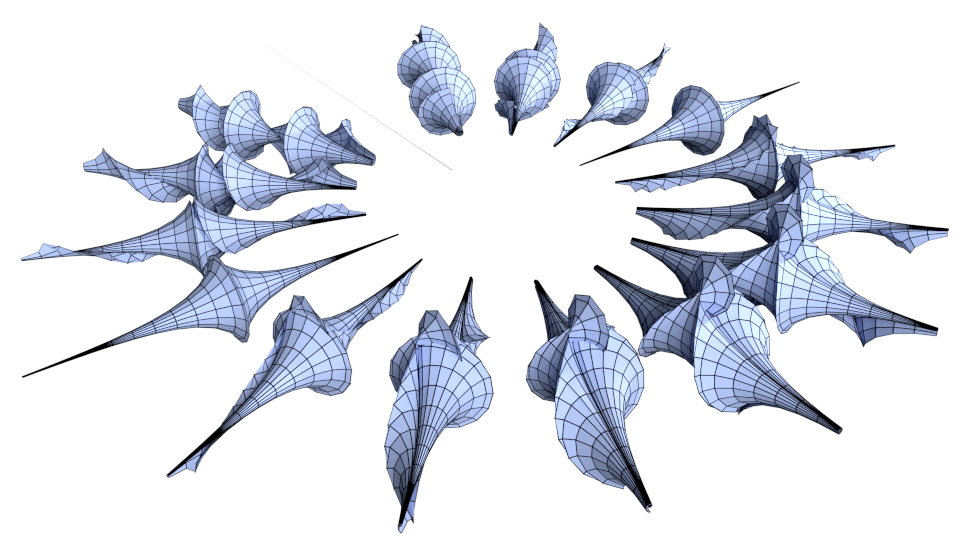}
  \caption{The B\"acklund transformations of the discrete curvature line Pseudosphere, aligned by angular parameter.}
  \label{fig:clock}
\end{figure}

\begin{abstract}
We present a $2\times 2$ Lax representation for discrete circular nets of constant negative Gau{\ss} curvature. It is tightly linked to the 4D consistency of the Lax representation of discrete K-nets (in asymptotic line parametrization). The description gives rise to B\"acklund transformations and an associated family. All the members of that family -- although no longer circular -- can be shown to have constant Gau{\ss} curvature as well. Explicit solutions for the B\"acklund transformations of the vacuum (in particular Dini's surfaces and breather solutions) and their respective associated families are given.
\end{abstract}

\section{Introduction}
\label{sec:intro}
Smooth surfaces of constant negative curvature and their transformations are a classical topic of differential geometry (for a modern treatment see, e.g., the book by Rogers and Schief \cite{Rogers:2002wy}). 

Discrete analogues of surfaces of constant negative Gau{\ss} curvature in \emph{asymptotic parametrization} (now known as \emph{K-nets}) and their B\"acklund transformations were originally defined by Wunderlich \cite{Wunderlich:1951wc} and Sauer \cite{Sauer:1950ca} in the early 1950s. In 1996 Bobenko and Pinkall \cite{Bobenko:1996ug} showed that these geometrically defined K-nets are equivalent to ones arising algebraically from a discrete moving frame 2x2 Lax representation of the well known discrete Hirota equation \cite{Hirota:1977cj}. This algebraic viewpoint highlights the interrelationship between a discrete net, its associated family (generated by the spectral parameter of the Lax representation, which corresponds to reparametrization of the asymptotic lines), and its B\"acklund transformations (arising from the 3D consistency of the underlying discrete evolution equation).

In the smooth setting, surface reparametrization is a simple change of variables and does not affect the underlying geometry. However, understanding surface reparam\-etrization in the discrete setting is a much more delicate issue. In particular, discrete analogues of constant negative Gau{\ss} curvature surfaces in \emph{curvature line parametrizations} have been defined and studied by restricting a notion of discrete curvature line parametrization (called \emph{C-nets} since each quad is concircular) with its corresponding definition of Gau{\ss} curvature \cite{Konopelchenko:1999te,Schief20061484,Bobenko:2010eg}. We call such objects \emph{circular K-nets} or \emph{cK-nets} and will be our main focus. Recently, a curvature theory has been introduced for a more general class of nets (so-called \emph{edge-constraint} nets) that furnishes both asymptotic K-nets and cK-nets with constant negative Gau{\ss} curvature \cite{Hoffmann:2014wq}.

In \cite{Schief:2003ug} Schief gave a Lax representation for circular K-nets in terms of $3\times 3$ matrices in the framework of a special reduction of C-nets and showed how circular 3D compatibility cubes give rise to B\"acklund transformations.  However, this B\"acklund transformation corresponds to a double B\"acklund transformation in smooth setting and the relationship between cK-nets and asymptotic K-nets remained unclear.

In what follows we show that cK-nets, discrete curvature line nets of constant negative Gau{\ss} curvature, exhibit a $2\times2$ Lax pair, associated family, and B\"acklund transformations that naturally arise from their construction as the diagonals of asymptotic K-net quadrilaterals with all edge lengths equal. In other words a cK-net Lax matrix is the product of two K-net matrices. This is reasonable and expected since curvature coordinates are the sum and difference of asymptotic ones. However, we wish to highlight three important subtleties that arise:

\begin{enumerate}
\item There are more cK-nets than those given by connecting the diagonals of K-nets with equal side lengths and retopologizing: for example, as shown in Figure \ref{fig:eightLoop}, even though each edge factors into the diagonal of a K-net quadrilateral, the four corresponding K-net quads do not share a central vertex.
\item The associated family of cK-nets yields nets in more general parametrization (since the spectral parameter corresponds to reparametrization of the asymptotic lines), but as shown in Theorem \ref{thm:ckFromLax} they are all constant negative Gau{\ss} curvature edge-constraint nets.
\item As shown in Figure \ref{fig:cube}, the 3D compatibility cube corresponding to the B\"acklund transformation of cK-nets is unusual since the equations for its sides are not the same as that of its top and bottom. However, double B\"acklund transformations with negative angular parameters do form a usual 3D consistent cube with circular faces. These double B\"acklund transformations also accept complex angular parameters, yielding nonfactorizable breather surfaces, as shown in Figure \ref{fig:complexBT}.
\end{enumerate}

The paper is organized as follows: after introducing some preliminaries, we recapitulate many facts about asymptotic K-nets. Then we briefly review the recently introduced theory of edge-constraint nets and their curvatures. The main results are in Section \ref{sec:cK-nets}. In Section \ref{sec:Lax} we define the Lax matrices and prove they give rise to edge-constraint nets. In \ref{sec:cKGeometry} we see that these are in fact Lax matrices for cK-nets (and their associated families) and that every cK-net arises in this way. The B\"acklund transformation for cK-nets is given in Section \ref{sec:BT}. Finally, in Section \ref{sec:lineSingleBT} we present closed form equations for some B\"acklund transformations of the straight line, yielding, e.g., discrete analogues of Dini's surfaces, Kuen's surface, and breather surfaces, together with their respective associated families.

\subsection{Preliminaries and notation}
We consider a discrete analogue of parametrized surfaces in $\R^3$ known as quad nets.
\begin{definition}
  A \emph{quad graph} $G$ is a strongly regular polytopal cell
  decomposition of a regular surface with all faces being quadrilaterals.
  A \emph{quad net} is an immersion of a quad graph into $\R^3$.
\end{definition}
For simplicity we assume $G$ to be $\Z^2$ in the following sections, though all results generalize to \emph{edge-bipartite} quad graphs. \footnote{For more general edge-bipartite graphs, vertices with valence greater than four might not have a continuous limit in the classical sense. For example, if the quad net is a discrete constant negative Gau{\ss} curvature surface parametrized by curvature lines, then these points are something like "Lorentz umbilics" \cite{Dorfmeister:2009eh}.} Furthermore, we will associate a unit normal to each vertex of a quad net, equipping it with a discrete Gau{\ss} map $n: G \to \Stwo$. This will be further explained in Section \ref{sec:steiner}.

To distinguish arbitrary vertices of a quad net (or its Gau{\ss} map) we will use shift notation. For $f:\Z^2\to\R^3$, $f$ will denote the map at a vertex $(k,\ell)$ and subsequent subindices will stand for shifts in the corresponding lattice directions: $f=f(k,\ell)$, $f_1:= f(k+1,\ell)$, $f_2:= f(k,\ell+1)$, $f_{12} = f(k+1,\ell+1)$, etc.

Discrete integrable surface theory has well established analogues of asymptotic (A-net) and curvature line (C-net) parametrizations \cite{Bobenko:2008tn}.
\begin{definition}
An \emph{A-net} is a quad net where each vertex star lies in a plane.
\end{definition}
\begin{definition}
A \emph{C-net} is a quad net where each face is inscribed in a circle.
\end{definition}

\subsection{(asymptotic) K-nets}
\label{sec:K-nets}
The theory of K-nets -- discretizations of surfaces of constant negative Gau{\ss} curvature in asymptotic line parametrization -- is well established (see, for example, \cite{Wunderlich:1951wc,Sauer:1950ca,Bobenko:1996ug,Hoffmann:1999wq,pinkall2008designing}). Geometrically, K-nets are A-nets in which every quad is a skew parallelogram, though we will equivalently define them using their moving frame description. We briefly recapitulate this construction and other facts (reviewed in the book \cite{Bobenko:2008tn}) that we will need later.

We express the quaternions $\Quat$ in terms of 2x2 complex matrices as a real vector space over the Pauli matrices $\{\qOne, -i \sigma_1, -i \sigma_2, -i \sigma_3\}$, where
\begin{equation}
\label{eq:PauliMatrices}
\qOne = \quadmatrix1001, ~ \sigma_1 = \quadmatrix0110, ~ \sigma_2 = \quadmatrix0{-i}{i}0, ~ \sigma_3 = \quadmatrix100{-1},
\end{equation}
and identify $\R^3$ with the space of imaginary quaternions. We will denote the projection $\Quat \to \R^3$ induced by taking the quaternionic imaginary part of $q \in \Quat$ by $\left[ q \right]^{\mathrm{tr} = 0}$ since it corresponds to the trace free part in the 2x2 matrix representation.

Following \cite{Bobenko:1996ug} consider the quaternionic matrices (more precisely maps $U,V:\Z^2 \to \Quat$) given by 
\begin{equation}
  \label{eq:KLaxPair}
\begin{aligned}
    U &=\quadmatrix{\cot(\frac{\delta_u}2)\frac {H_1}{H}}{i \lambda}{i \lambda}{\cot(\frac{\delta_u}2)\frac {H}{H_1}}\\
    V &= \quadmatrix{1}{\frac{i}{\lambda}\tan(\frac{\delta_v}2)H_2H}{\frac{i}{\lambda}\tan(\frac{\delta_v}2)\frac1{H_2H}}{1}    
\end{aligned}
\end{equation}
depending on the so-called \emph{spectral parameter} $\lambda$ with $H = e^{i h}$ and the matrix problem
\begin{equation}
  \label{eq:ZeroCurvature}
\Phi_1 = U \Phi, \quad \Phi_2 = V \Phi.
\end{equation}
Under the assumption that $\delta_u$ and $\delta_v$ only depend on the second and first lattice directions, respectively, the integrability condition $V_1U = U_2V$ implies that the $h$ variables solve the Hirota equation \cite{Hirota:1977cj}
\begin{equation}
e^{i(h_{12}+h)} - e^{i(h_1+h_2)} =\tan\frac{\delta_u}2\tan\frac{\delta_v}2 \left( 1-e^{i(h+h_1+h_{12}+h_2)} \right).
\end{equation}
A quad net $f$ together with a Gau{\ss} map $n$ can then be generated for each $t \in \R$ via the following formulas:
\begin{equation}
  \label{eq:KSym}
  f = 2 \left[\Phi^{-1}\frac{\partial}{\partial t}\Phi\right]^{\mathrm{tr}=0}, \quad n = -i \Phi^{-1}\sigma_3 \Phi, \quad \lambda = e^t.
\end{equation}
This method of getting the immersion by differentiating with respect to the spectral parameter instead of integrating the frame is called the Sym \cite{Sym:1985kl} or Sym-Bobenko \cite{Bobenko:1994tv} formula.

\begin{definition}
For each $t \in \R$, the quad net $f$ arising from the system \eqref{eq:KLaxPair} by means of the Sym formula \eqref{eq:KSym} is called a {\em K-net}. The family of K-nets for all values of $t$ is called the {\em associated family} of each of its members.
\end{definition}

A K-net immersion $f$ can also be reconstructed (up to global scaling) from its Gau{\ss} map $n$ by the relations $f_1-f = n_1\times n$ and $f_2-f = n\times n_2$. The Gau{\ss} map $n$ solves the discrete Moutard equation restricted to $\Stwo$ (see \cite{Nimmo:1997dg}). The discrete Moutard equation is known to be 3D compatible, giving rise to a discrete version of the classical B\"acklund transformation and a corresponding permutability theorem (an alternative algebraic proof in terms of the Hirota equation is given in \cite{Bobenko:1996ug} and for more geometric insight see \cite{Sauer:1950ca,Wunderlich:1951wc}).

\begin{definition}
\label{def:knetBT}
Given a K-net $f:\Z^2\to\R^3$ with vertex normals $n$, an angle $\alpha \neq k \pi \in \R$, and a direction $v\perp n_{0,0}$ there exists a unique K-net $\hat f$ (with vertex normals $\hat n$) such that $(\hat f_{0,0}-f_{0,0})$ is parallel to $v$, $\Vert \hat f- f\Vert = \sin\alpha$, and $\angle(n,\hat n) = \alpha$. The resulting K-net $\hat f$ is called a {\em B\"acklund transform} of $f$.
\end{definition}
\begin{theorem}
  Consider a K-net $f:\Z^2\to \R^3$ with Gau\ss{} map $n$
  together two B\"acklund transforms $\hat f$ and $\tilde f$ with
  parameters $\hat \alpha$ and $\tilde \alpha$, respectively. Then there is a
  unique K-net $\hat{\tilde f}$ that is a
  $\tilde\alpha$-B\"acklund transform of $\hat f$ as well as a
  $\hat \alpha$-B\"acklund transform of $\tilde f$.
\end{theorem}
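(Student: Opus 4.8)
The plan is to reduce everything to the discrete Moutard equation satisfied by the Gau\ss{} map and to exploit its multidimensional consistency. Recall from the discussion preceding Definition~\ref{def:knetBT} that a K-net immersion $f$ is reconstructed from its Gau\ss{} map $n:\Z^2\to\Stwo$ by the Lelieuvre-type relations $f_1-f=n_1\times n$ and $f_2-f=n\times n_2$, and that $n$ solves the discrete Moutard equation restricted to $\Stwo$. A B\"acklund transform $\hat f$ with parameter $\hat\alpha$ is, at the level of Gau\ss{} maps, nothing but the adjunction of a transversal lattice direction: one regards $n$ and $\hat n$ as the two sheets $(k,\ell,0)$ and $(k,\ell,1)$ of a Moutard net on $\Z^2\times\{0,1\}$, the Moutard coefficient in the new direction being fixed by $\hat\alpha$. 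The metric side of Definition~\ref{def:knetBT} is then automatic from the same cross-product reconstruction: setting $\hat f-f=\hat n\times n$ gives $\Vert\hat f-f\Vert=\vert\hat n\times n\vert=\sin\angle(n,\hat n)=\sin\hat\alpha$.

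First I would set up the combined lattice $\Z^2\times\{0,1\}^2$, using the third and fourth directions for the two transformations with parameters $\hat\alpha$ and $\tilde\alpha$, respectively. On the faces $(k,\ell,\cdot,0)$ and $(k,\ell,0,\cdot)$ the data are the prescribed Moutard nets producing $\hat n$ and $\tilde n$. The heart of the argument is then the 3D consistency of the discrete Moutard equation: given the three normals $n$, $\hat n$, $\tilde n$ around a vertex of the elementary cube in the $3$--$4$ plane, there is a unique completion $\hat{\tilde n}$, and the resulting map on $\Z^2\times\{0,1\}^2$ again solves Moutard in every coordinate plane. Because the Moutard evolution in these directions is exactly the $\Stwo$-reduction, the normalization $\vert\hat{\tilde n}\vert=1$ is preserved, so $\hat{\tilde n}$ is a genuine Gau\ss{} map.

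Next I would reconstruct the immersion. Applying the cross-product relations in all four directions, $\hat{\tilde f}$ is obtained from $\hat{\tilde n}$; closure of the $f$-increments around each new quadrilateral -- which makes $\hat{\tilde f}$ well defined and forces the skew-parallelogram (equal opposite-edge-length) property characterizing K-net quads -- is precisely the integrability furnished by the Moutard equation. It remains to check the geometric B\"acklund conditions across the two families of new edges. Along an edge in direction $4$ emanating from $\hat n$ one reads off $\angle(\hat n,\hat{\tilde n})=\tilde\alpha$ and $\Vert\hat{\tilde f}-\hat f\Vert=\sin\tilde\alpha$, so that $\hat{\tilde f}$ is a $\tilde\alpha$-transform of $\hat f$; the symmetric reading in direction $3$ from $\tilde n$ gives the $\hat\alpha$-transform of $\tilde f$. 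Uniqueness of $\hat{\tilde f}$ follows from the uniqueness of the Moutard completion, the reconstruction being determined up to the global translation already pinned down by $f$.

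The step I expect to be the main obstacle is the bookkeeping of parameters: showing that the Moutard coefficient carried by the third direction reappears as the angle $\tilde\alpha$ (and not $\hat\alpha$) when measured across the completed edges, and dually for the fourth direction. This is exactly the content of permutability, and it is where the symmetry of the Moutard cube must be matched against the geometric normalization $\angle(n,\hat n)=\alpha$, $\Vert\hat f-f\Vert=\sin\alpha$; once the correspondence between Moutard data and B\"acklund angle is established on a single cube, the theorem follows. An equivalent route, closer in spirit to the rest of the paper, is to implement each transformation by a $\lambda$-dependent Darboux matrix $\hat B$, $\tilde B$ acting on the frame $\Phi$ of \eqref{eq:ZeroCurvature} and to prove permutability as the discrete zero-curvature identity $\tilde B'\,\hat B=\hat B'\,\tilde B$ in the two transformation directions, where $\hat B',\tilde B'$ denote the second-stage matrices; solving this rational-in-$\lambda$ equation for the common product yields the frame of $\hat{\tilde f}$, with the pole structure in $\lambda$ again encoding the two angles.
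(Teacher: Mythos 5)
Your proposal is correct and follows essentially the same route the paper itself indicates: the paper derives this permutability theorem from the multidimensional consistency of the discrete Moutard equation satisfied by the Gau\ss{} map (with the immersion recovered via the Lelieuvre-type cross-product relations), citing \cite{Bobenko:1996ug} for the algebraic Hirota/Darboux-matrix variant that you also sketch as an alternative. Your elaboration of the parameter bookkeeping and the $\Stwo$-normalization is a faithful filling-in of the details the paper leaves to the references.
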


\subsection{Edge-constraint nets and curvatures}
\label{sec:steiner}
Let us briefly recall the notion of edge-constraint nets and their curvatures. The definition of edge-constraint nets is first and foremost a weak coupling of a quad net with its Gau\ss{} map. This allows for a curvature theory based on normal offsets that turns out to be consistent with many known discretizations of integrable surfaces. In the case of C-nets it coincides with the definitions given in \cite{Schief20061484,Bobenko:2010eg} which include the nets of constant mean curvature \cite{Bobenko:1999us} and minimal nets \cite{Bobenko:1996vq} defined by Bobenko and Pinkall. Even nets of constant negative Gau\ss{} curvature in asymptotic line (K-nets) and curvature line parametrization (cK-nets, the topic of this paper) are in this class. Moreover, the class of edge-constraint nets also includes the associated families of all of these constant curvature nets (and furnishes them with the expected curvatures). These concepts are discussed in depth in \cite{Hoffmann:2014wq}.

\begin{definition}
Let $G$ be a quad graph. Two maps $f:G\to \R^3$ and $n:G\to \Stwo$ are said to form an \emph{edge-constraint net} if
\begin{equation}f_i-f \perp n_i+n\end{equation} holds for all edges of the graph $G$. The map $n$ is then called a \emph{Gau\ss{} map} for $f$. A unit vector $N\perp \spann\{n_{12}-n,n_2-n_1\}$ is said to be a face normal. 
The Gau\ss{} and mean curvature for an edge-constraint net $f,n$ with face normal $N$ are given by
\begin{equation}
    \label{eq:GaussCurvature}
    K := \frac{\det(n_{12}-n,n_2-n_1,N)}{\det(f_{12}-f,f_2-f_1,N)}
  \end{equation}
  and 
  \begin{equation}
    \label{eq:MeanCurvature}
    H := \frac12\frac{\det(f_{12}-f,n_2-n_1,N)+\det(n_{12}-n,f_2-f_1,N)}{\det(f_{12}-f,f_2-f_1,N)},
  \end{equation}
  respectively.
\end{definition}

\begin{remark}
Generically the face normal is unique up to sign but even if it is not the above defined curvatures are invariant under the choice of $N$ (see \cite{Hoffmann:2014wq} for more details).
\end{remark}
  
\begin{remark}
This notion of curvature is motivated by the Steiner formula that relates the area of offset surfaces with the curvatures of the original one. If $f^t := f+ tn$ taken pointwise defines the offset surface, one finds $A(f^t) = (1 + 2 H t+ K t^2) A(f)$ where $A$ denotes the area of the surface over a given region and $H$ and $K$ are the integrals of the mean and Gau\ss{} curvature of $f$ over that region.
\end{remark}

\begin{lemma}
A K-net (with spectral parameter $\lambda = e^t$) is an edge-constraint net with Gau{\ss} curvature per quad given by
\begin{equation}
K = -2 \cosh^2t \frac{(1 - \cos\delta_u \tanh t)(1 + \cos\delta_v \tanh t)}{\cos\delta_u + \cos\delta_v}
\end{equation}
\begin{proof}
K-nets are edge-constraint since by construction $n \perp f_i - f$ holds for all edges incident with a given vertex. The Gau{\ss} curvature can be computed directly (for further details see again \cite{Hoffmann:2014wq}).
\end{proof}
\end{lemma}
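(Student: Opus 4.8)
The statement combines a structural claim (that a K-net is edge-constraint) with an explicit curvature. For the structural part I would work directly from the Sym formula \eqref{eq:KSym}. Writing $f_i-f=2\,[\Phi^{-1}(W^{-1}\partial_t W)\Phi]^{\mathrm{tr}=0}$ with $W=U$ for $i=1$ and $W=V$ for $i=2$, and $n=\Phi^{-1}\qk\Phi$ (since $-i\sigma_3=\qk$), both objects are imaginary quaternions conjugated by the \emph{same} frame $\Phi$. A one-line computation exhibits $[U^{-1}\partial_t U]^{\mathrm{tr}=0}$ and $[V^{-1}\partial_t V]^{\mathrm{tr}=0}$ as purely off-diagonal matrices, i.e. lying in $\spann\{\qi,\qj\}$, which is orthogonal to $\qk$. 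Hence $n\perp(f_i-f)$ at the vertex, and evaluating the conjugate $\nu_i:=W^{-1}\qk W$ against the same base quaternion shows $\langle f_i-f,\,n_i\rangle=0$ as well; therefore $f_i-f\perp n_i+n$ and the edge-constraint condition holds on every edge.

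For the curvature I would reduce the two determinants in \eqref{eq:GaussCurvature} to a computation at a single vertex. The key point is that for real $\lambda$ each of $U,V$ is a positive multiple of a unit quaternion — for instance $U=\cot\tfrac{\delta_u}2\cos(h_1-h)\,\qOne-\lambda\,\qi-\cot\tfrac{\delta_u}2\sin(h_1-h)\,\qk$ with $|U|^2=\cot^2\tfrac{\delta_u}2+\lambda^2$ — so $\Phi$ is a positive scalar times an element of $SU(2)$ and $q\mapsto\Phi^{-1}q\Phi$ is a rotation in $SO(3)$. Because a $3\times3$ determinant is $SO(3)$-invariant, \eqref{eq:GaussCurvature} may be evaluated after stripping the common conjugation: the surface diagonals become $A+U^{-1}B'U$ and $B-A$, where $A=2[U^{-1}\partial_tU]^{\mathrm{tr}=0}$, $B=2[V^{-1}\partial_tV]^{\mathrm{tr}=0}$ and $B'=2[V_1^{-1}\partial_tV_1]^{\mathrm{tr}=0}$ (using only the telescoping identities $f_{12}-f=(f_1-f)+(f_{12}-f_1)$ and $f_2-f_1=(f_2-f)-(f_1-f)$), while the Gau\ss{}-map diagonals become $\nu_{12}-\nu_0$ and $\nu_2-\nu_1$ with $\nu_0=\qk$, $\nu_1=U^{-1}\qk U$, $\nu_2=V^{-1}\qk V$, $\nu_{12}=U^{-1}V_1^{-1}\qk V_1U$. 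The face normal reduces to the base vector $N_0\perp\spann\{\nu_{12}-\nu_0,\nu_2-\nu_1\}$.

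It then remains to compute these base quaternions and simplify. The edge vectors come out as $A=\tfrac{\sin\delta_u}{\cosh t\,(1-\cos\delta_u\tanh t)}\,e_A$ and $B=\tfrac{\sin\delta_v}{\cosh t\,(1+\cos\delta_v\tanh t)}\,e_B$ for unit $e_A,e_B\in\spann\{\qi,\qj\}$ (already producing the two edge lengths after the half-angle identity $\cos\delta=\tfrac{1-\tan^2(\delta/2)}{1+\tan^2(\delta/2)}$), and the conjugates $\nu_1,\nu_2$, and hence $N_0$, follow by rotating $\qk$ by $U,V$. Substituting into \eqref{eq:GaussCurvature} and simplifying both determinants with the half-angle identities and the Hirota equation yields the claimed $K$. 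As a consistency check, at $t=0$ the formula collapses to $K=-2/(\cos\delta_u+\cos\delta_v)$, the expected negative constant of the undeformed K-net, and the continuum limit $\delta_u,\delta_v\to0$ gives $K\to-1$.

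The genuine obstacle is the final simplification. The vertex $\nu_{12}$ (equivalently the term $U^{-1}B'U$) carries the shifted fields $h_1,h_{12}$ through the factor $V_1$, so the two determinants initially depend on all four values $h,h_1,h_2,h_{12}$; the Hirota equation is exactly what is needed to eliminate this dependence and leave only $\delta_u,\delta_v$ and $t$ (with the per-quad reading that $\delta_u,\delta_v$ are the quad's own parameters, constant in the homogeneous case). One must also pin down the orientation of $N_0$ so that numerator and denominator carry consistent signs and $K$ emerges negative. Everything preceding this — the reduction to base quaternions and the individual conjugations — is structural and routine.
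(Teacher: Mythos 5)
Your proposal is correct, but it takes a more algebraic route than the paper on both halves of the statement. For the edge-constraint property the paper argues in one geometric line: a K-net is an A-net whose vertex star lies in the plane orthogonal to the vertex normal, so $n\perp f_i-f$ and $n_i\perp f_i-f$ hold \emph{separately} and the sum condition follows. You derive the same two orthogonalities from the frame: the first is immediate since $[W^{-1}\partial_t W]^{\mathrm{tr}=0}$ is off-diagonal and hence orthogonal to $\qk$, but the second, $\langle f_i-f,\,n_i\rangle=0$, is not literally ``off-diagonal versus diagonal'' because $W^{-1}\qk W$ is no longer diagonal; you should make explicit the one extra observation that $W^{-1}\partial_t W=W^{-1}\bigl((\partial_t W)W^{-1}\bigr)W$, so the pairing reduces to $\langle \qk,\,[(\partial_t W)W^{-1}]^{\mathrm{tr}=0}\rangle$, and $(\partial_t W)W^{-1}$ is again scalar-plus-off-diagonal for both $W=U$ and $W=V$. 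With that line added the structural part is complete, and it has the advantage over the paper's argument of not invoking the A-net characterization at all. For the curvature the paper simply defers to \cite{Hoffmann:2014wq}; your reduction of \eqref{eq:GaussCurvature} to base quaternions via the rotation-invariance of the determinants is the right way to organize that computation, your edge lengths $\frac{\sin\delta_u}{\cosh t\,(1-\cos\delta_u\tanh t)}$ and $\frac{\sin\delta_v}{\cosh t\,(1+\cos\delta_v\tanh t)}$ are correct (and indeed already account for the entire denominator of the claimed $K$), and your identification of the Hirota equation as what eliminates the residual dependence on $h,h_1,h_2,h_{12}$ is accurate. What your write-up buys is a self-contained derivation independent of the external reference; what it does not yet contain is the final determinant simplification, so on that half it remains a (sound) computational strategy at essentially the same level of completeness as the paper's own appeal to ``a direct calculation.''
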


\section{Circular K-nets}
\label{sec:cK-nets}
Circular nets of constant negative Gau\ss{} curvature have been discussed in \cite{Konopelchenko:1999te,Schief20061484,Bobenko:2010eg} by looking at C-nets and requiring that their Gau{\ss} curvature \eqref{eq:GaussCurvature} be $K = -\frac1{\rho^2}$ for some constant $\rho\neq0$. For lack of a better name we call such nets \emph{cK-nets}. Let us start with an example.
\begin{example}[Pseudosphere]
There is a natural discrete version of the tractrix construction as the curve halfway between a regular planar curve $\gamma$ and its \emph{Darboux transform} $\hat \gamma$, as shown in Figure~\ref{fig:DiscreteTractrix} left. Given a regular discrete curve $p$ (i.e., a polygon with no vanishing edges) and a starting point $\hat p_0$ at distance $2d>0$ from $p_0$, there is a unique polygon $\hat p$ such that: (i) $\Vert \hat p - p\Vert = 2d$, (ii) $\Vert \hat p_1 -\hat p\Vert = \Vert p_1 -p\Vert$, and (iii) the quadrilaterals $p,p_1,\hat p_1, \hat p$ form planar non-embedded parallelograms (parallelograms folded along their diagonals). The regular discrete curve $\hat p$ is known as the \emph{discrete Darboux transform} of $p$ \cite{Hoffmann:2008ub}. The \emph{tractrix $\tilde p$ of $p$} is defined as the polygon pointwise halfway between the two: $\tilde p = \frac12(\hat p + p)$. Normals to the smooth tractrix $\tilde \gamma$ are given (maybe up to sign) by normalizing the tangent vector $\hat \gamma - \gamma$ and rotating by 90 degrees. Similarly, we furnish the discrete tractrix $\tilde p$ with normals $\tilde n$ at vertices by taking 90 degree rotations of $\frac{\hat p - p}{\| \hat p - p \|}$, as shown in Figure~\ref{fig:DiscreteTractrix} right.

\begin{figure}[t]
\centering
\includegraphics[height=1.4in]{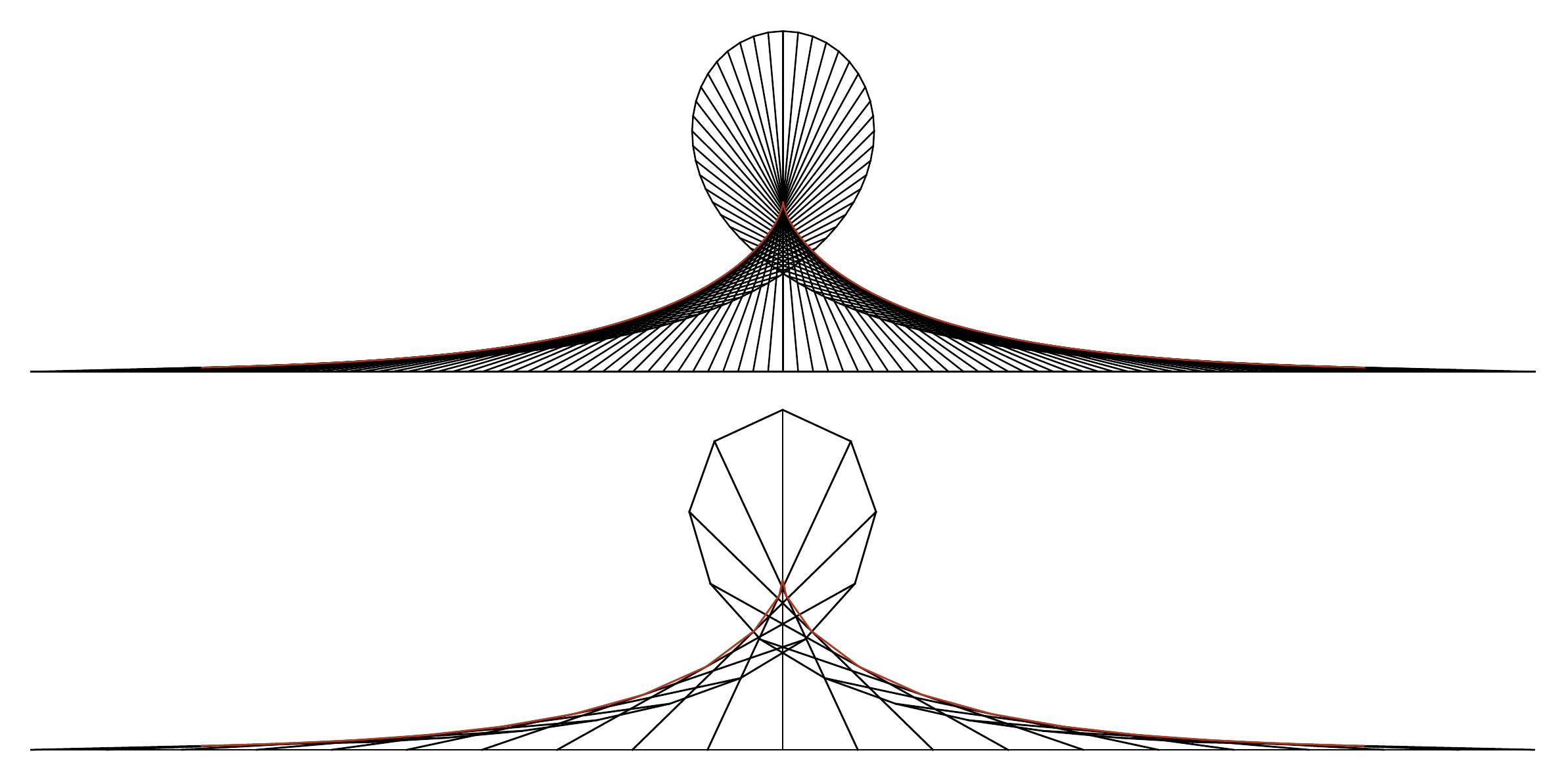}\quad\quad
\includegraphics[height=1.4in]{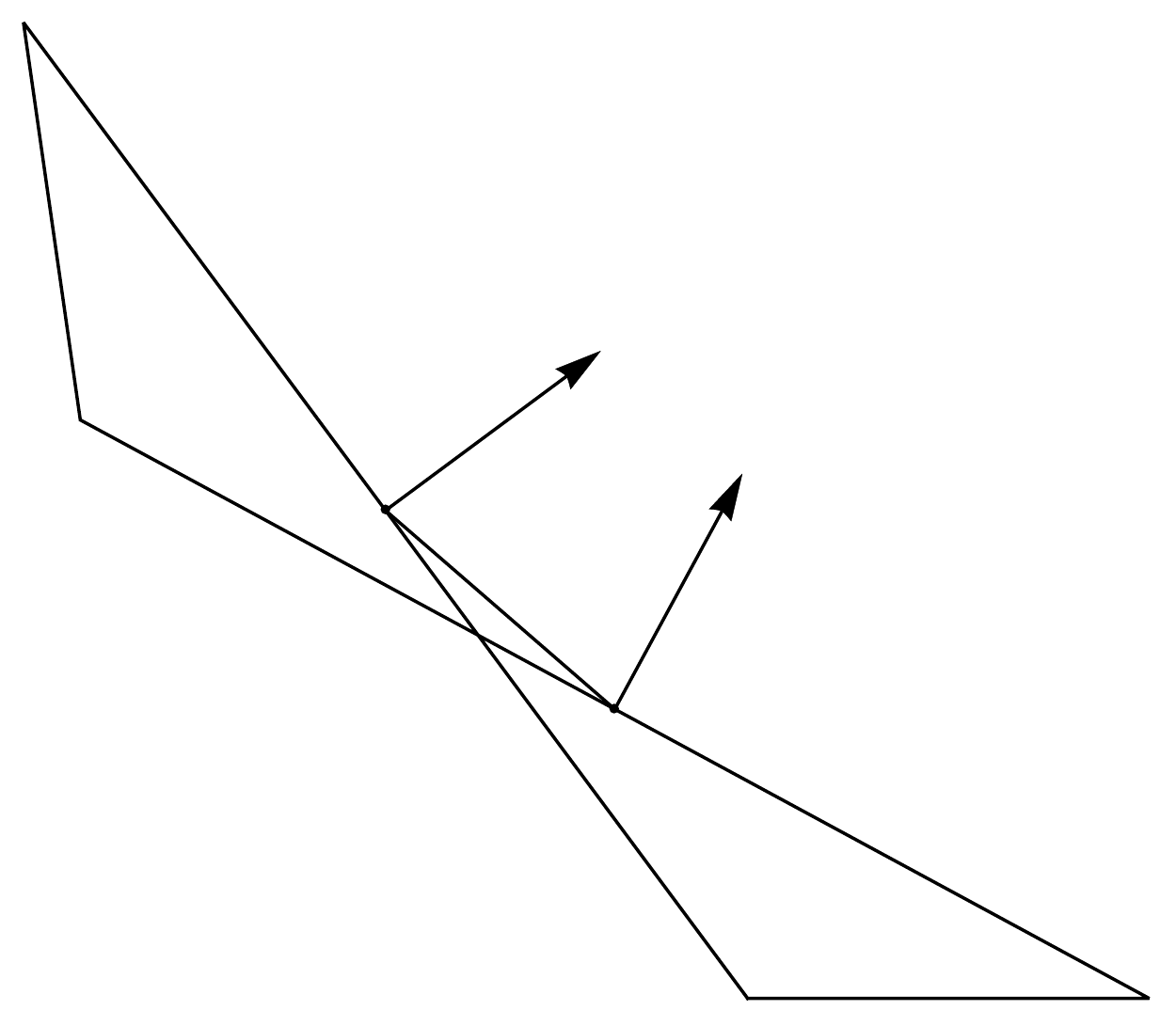}
  \caption{Left: Smooth (above) and discrete (below) Darboux transform and tractrix (red) of the straight line. Right: Natural normals for the discrete tractrix.}
  \label{fig:DiscreteTractrix}
\end{figure}

Starting from the polygon $p_k = (\epsilon k,0)$ and an initial point at a given distance, say $\hat p_0 = (0,2d)$ (this is the symmetric choice but that is not necessary), we generate a tractrix polygon $\tilde p$ together with normals $\tilde n$ that can be used to form a discrete surface of revolution: Given a rotation angle $\phi$ (choosing an integer fraction of $2\pi$ guarantees it closes in the rotational direction), define
\begin{equation}
f(k,\ell) =
\begin{bmatrix}
(\tilde p_k)_x\\
\cos(\ell \phi) (\tilde p_k)_y\\
\sin(\ell \phi)(\tilde p_k)_y
\end{bmatrix}
\end{equation}
and rotate the normals $\tilde n$ along with their corresponding points to form the Gau{\ss} map $n(k,\ell)$. By construction the quads of both $f$ and $n$ are planar isosceles trapezoids lying in parallel planes, so $(f,n)$ is a circular edge-constraint net. Through elementary geometry one can compute the signed area of each isosceles trapezoid of $f$ and its corresponding trapezoid of $n$. Since the face normal per quad is in fact perpendicular to each of these trapezoids, the ratio of these areas is the Gau{\ss} curvature \eqref{eq:GaussCurvature}, which is found to be $-\frac{1}{d^2}$ for every quad. In particular, it is independent of both discretization parameters $\epsilon$ and $\phi$, so all resulting discrete pseudospheres are cK-nets. 

Figure~\ref{fig:tractrixPseudosphere} shows a resulting discrete Pseudosphere.
\end{example}

\begin{figure}[t]
  \centering
  \includegraphics[width=.6\hsize]{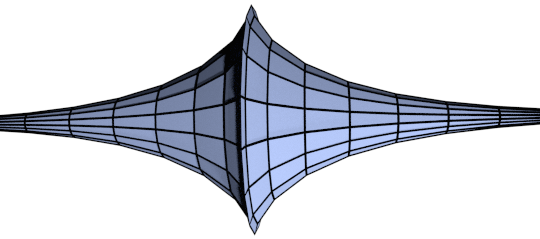}
  \caption{A cK-net pseudosphere of revolution.}
  \label{fig:tractrixPseudosphere}
\end{figure}

\begin{remark}
\label{rem:pseudoFormulas}
Schief \cite{Schief:2003ug} constructed by other methods the same pseudospheres as those above and provided an explicit formula for the discrete immersion. In \cite{Bobenko:2010eg}, Bobenko, Pottmann, and Wallner, gave an implicit relation for the meridian polygon and its normals to produce discrete cK-nets of revolution. The above tractrix construction provides explicit normals at vertices furnishing an edge-constraint net of constant negative Gau{\ss} curvature:
\begin{equation}
f(k, \ell) = 
\begin{bmatrix}
\epsilon k - \tanh(\tau k) \\
\cos(\ell \phi) \sech(\tau k) \\
\sin(\ell \phi) \sech(\tau k)
\end{bmatrix} \quad \mathrm{with} \quad  
n(k, \ell) = 
\begin{bmatrix}
\sech(\tau k) \\
\cos(\ell \phi) \tanh(\tau k) \\
\sin(\ell \phi) \tanh(\tau k)
\end{bmatrix},
\end{equation}
where $\tau = \log\frac{2+\epsilon}{2-\epsilon}$.
\end{remark}

\subsection{A Lax pair}
\label{sec:Lax}

In the smooth setting the curvature lines of a surface of constant negative Gau{\ss} curvature are the sum and difference of the arclength parametrized asymptotic lines. To discover a Lax pair for cK-nets it is therefore natural to consider the nets formed by the diagonals of a K-net formed by skew rhombi ($\delta_u = - \delta_v$).

Given the Lax pair \eqref{eq:KLaxPair} for K-nets consider the following matrix
\begin{equation}
\mathcal L = V_1U = \quadmatrix{\cot\frac{\delta}2\frac{H_1}H + \tan\frac{\delta}2 H_1H_{12}}{i(\lambda -\frac{HH_{12}}\lambda)}{i(\lambda - \frac1{\lambda HH_{12}})}{\cot\frac{\delta}2\frac{H}{H_1} + \tan\frac{\delta}2 \frac{1}{H_1H_{12}}}
\end{equation}
with the understanding that $\delta = \delta_u = -\delta_v$.

Setting aside for a moment the fact that $\mathcal L$ arises as the product of two matrices along edges of a K-net, we can assign $\mathcal L$ to edges of a lattice and ask when this closes. 
After relabeling the entries we can set
\begin{equation}
\label{eq:LaxPair}
\begin{split}
L &=  \quadmatrix{\cot\frac{\delta_1}2\frac{l}s + \tan\frac{\delta_1}2 ls_1}{i(\lambda -\frac{ss_{1}}\lambda)}{i(\lambda - \frac1{\lambda ss_{1}})}{\cot\frac{\delta_1}2\frac{s}{l} + \tan\frac{\delta_1}2 \frac{1}{ls_{1}}}\\
M &=  \quadmatrix{\cot\frac{\delta_2}2\frac{m}s + \tan\frac{\delta_2}2 ms_2}{i(\lambda -\frac{ss_{2}}\lambda)}{i(\lambda - \frac1{\lambda ss_{2}})}{\cot\frac{\delta_2}2\frac{s}{m} + \tan\frac{\delta_2}2 \frac{1}{ms_{2}}}
\end{split}
\end{equation}
with unitary variables $s$ at vertices of a square lattice and complex functions $l$ and $m$ on edges in the first and second lattice directions.\footnote{
The matrices $L$ and $M$ can be gauged to only have edge variables. For 
\begin{equation*}
G=\quadmatrix{\sqrt{s}}{0}{0}{\sqrt{s}^{-1}}
\end{equation*}
we find
\begin{equation*}
G_1^{-1}LG = \quadmatrix{l\frac{\cot \frac{\delta_1}2}{\sqrt{s_1s}} +l \tan\frac{\delta_1}2 \sqrt{s_1s}}{i\frac{\lambda}{\sqrt{s_1s}} -i \frac{\sqrt{s_1s}}{\lambda}}{i\lambda \sqrt{s_1 s} - i\frac1{\lambda \sqrt{s_1 s}}}{\frac1l cot\frac{\delta_1}2 \sqrt{s_1 s} + \frac1l \frac{\tan\frac{\delta_1}2}{\sqrt{s_1s}}}
\end{equation*}
and a similar expression for $G_2^{-1}MG$. For $i=1,2$ choose $\sqrt{s_i s}$ as new variables on the edges.
}
The length of the edge variable $l$ (or $m$) depends on the length of the K-net edges corresponding to the $L$ (or $M$) Lax matrix. The length of a corresponding K-net edge is given by $e_i = \sin\delta_i$. If $e_i > 1$ then $\delta_i=\arcsin e_i$ is complex and $\tan\frac{\delta_i}2$ goes from real to unitary. To ensure that $L$ (or $M$) is quaternionic the length of $l$ (or $m$) is
\begin{equation}
\label{eq:ellLength}
\sqrt{\frac{\cos\left(\rho_i-\arg\tan\frac{\delta_i}2\right)+\cos\left(\rho-\arg\tan\frac{\delta_i}2\right)}{\cos\left(\rho_i+\arg\tan\frac{\delta_i}2\right)+\cos\left(\rho+\arg\tan\frac{\delta_i}2\right)}},
\end{equation}
where $s = e^{i \rho}$. When $\tan\frac{\delta_i}2$ is real this length is one as expected.

\begin{remark}
For complex $\delta_i$ these $L$ (or $M$) Lax matrices still factor into a product of $U,V$ matrices of the K-net form \eqref{eq:KLaxPair}, but each factor is no longer a quaternion, but a biquaternion. Recall that the biquaternions are given as the \emph{complex} vector space over the Pauli matrices \eqref{eq:PauliMatrices}. We will refer to both quaternionic and biquaternionic matrices of this form as K-net matrices.
\end{remark}

The compatibility condition
\begin{equation}
  \label{eq:integrabilityCondition}
 M_1L =L_2M
\end{equation}
implies $\det M_1\det L = \det L_2 \det M$. One finds $\det L =
\lambda^2 +\frac1{\lambda^2} +
\tan^2\frac{\delta_1}2+\cot^2\frac{\delta_1}2$ and $\det M = \lambda^2
+\frac1{\lambda^2} + \tan^2\frac{\delta_2}2+\cot^2\frac{\delta_2}2$. In the spirit of the
K-net case we assume that $\delta_1$ is constant in the second lattice direction and $\delta_2$ is constant in the first one. This allows \eqref{eq:integrabilityCondition} to be solved:
Given $s,s_1,s_2,l,m$, and setting $t_i = \tan\frac{\delta_i}{2}$ for notational simplicity, one finds after a long computation that
\begin{equation}
\label{eq:LaxEvolution}
\begin{split}
l_2 &= \frac{-m t_2 (s s_2 + t_1^2) + l t_1 (s s_2 + t_2^2)}{m(-lt_2(1+s s_2 t_1^2) + m(t_1 + s s_2 t_1 t_2^2))}\\
m_1 &= \frac{m t_2 (s s_1 + t_1^2) - l t_1 (s s_1 + t_2^2)}{l(lt_2(1+s s_1 t_1^2) - m(t_1 + s s_1 t_1 t_2^2))}\\
s_{12} &= s\frac{S^+ - S^-_1}{S^+-S^-_2}, \quad \mathrm{where} \\
\quad&S^+ = l^2 t_1 t_2 (1 + s s_1 t_1^2) (s s_2 + t_2^2) + m^2 t_1 t_2 (s s_1 + t_1^2)(1 + s s_2 t_2^2),\\
\quad&S^-_1 = l m (s s_1 t_1^2 + t_2^2 (2 t_1^2 + s s_2 (1 + 2 s s_1 t_1^2 + t_1^4)) + s s_1 t_1^2 t_2^4),\\
\quad&S^-_2 = l m (s s_2 t_1^2 + t_2^2 (2 t_1^2 + s s_1 (1 + 2 s s_2 t_1^2 + t_1^4)) + s s_2 t_1^2 t_2^4).
\end{split}
\end{equation}
For K-nets the zero curvature condition holds for all $\lambda$ and is not only 3D consistent, but multidimensionally consistent. The $L,M$ Lax matrix holonomy condition corresponds to the consistency of the 4D K-net system on a 4D cube. An 8-loop of K-net edges on such a cube has $\lambda$ independent holonomy and cK-net edges are given by diagonals on 2D faces. Note that in general the 4D solution cannot be extended from a 3D system as shown in the special case of a cK-net quad in Figure~\ref{fig:eightLoop}.

\begin{figure}[t]
  \centering
  \includegraphics[width=.6\hsize]{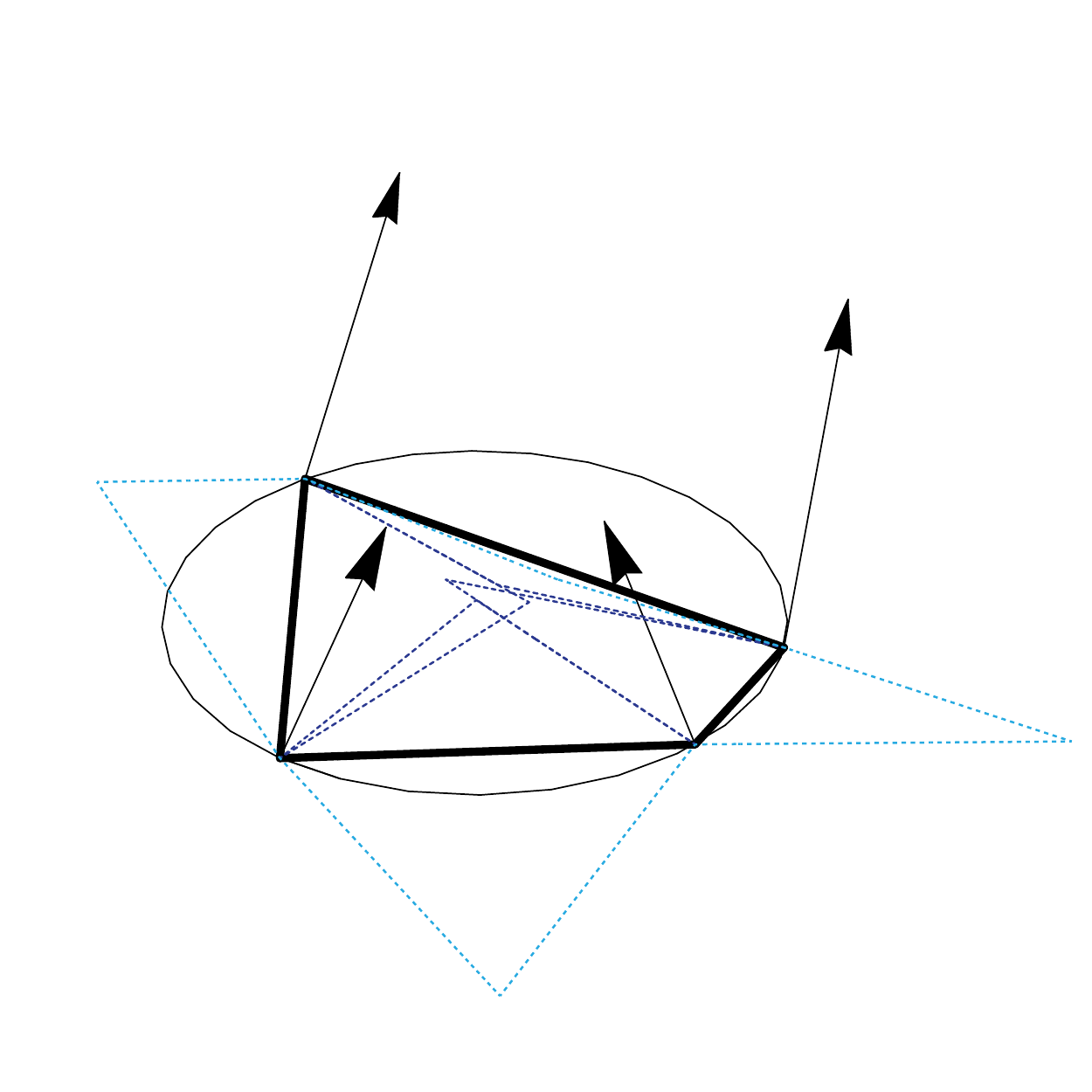}
  \caption{A quad arising from the $L,M$ Lax matrices (this particular example is a cK-net quad, see Theorem \ref{thm:ckFromLax}). Observe that each edge (thick black) is the diagonal of a folded parallelogram even though these four parallelograms do not meet in a vertex. The zero curvature condition for the $L,M$ Lax matrices follows from the zero curvature condition for K-nets by looking at one of the K-net 8-edge loops (dashed dark and light blue).}
  \label{fig:eightLoop}
\end{figure}

\begin{theorem}
Let $\Phi:\Z^2\to\R^3$ be a solution to $\Phi_1 = L\Phi$, $\Phi_2 = M\Phi$ with $L$ and $M$ as in \eqref{eq:LaxPair} solving the integrability condition \eqref{eq:integrabilityCondition}. Then $f,n:\Z^2\to \R^3$ given by
\begin{equation}
\label{eq:Sym}
  f = 2\left[\Phi^{-1}\frac{\partial}{\partial t}\Phi\right]^{\mathrm{tr}=0}, \quad n = -i \Phi^{-1}\sigma_3 \Phi, \quad \lambda = e^t
\end{equation}
is an edge-constraint net for each $t \in \R$.
\end{theorem}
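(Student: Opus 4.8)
The plan is to reduce the geometric edge-constraint condition to a frame-independent algebraic identity for the Lax matrices, and then to read that identity off from the factorization of $L$ and $M$ into K-net matrices. First I would record that $n$ really is a Gau\ss{} map: since $-i\sigma_3 = \qk$ and conjugation $v\mapsto \Phi^{-1}v\Phi$ by a nonzero quaternion is a rotation of $\mathrm{Im}\,\Quat\cong\R^3$, the vector $n = \Phi^{-1}\qk\Phi$ is a unit imaginary quaternion, so indeed $n:\Z^2\to\Stwo$.

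Next I would differentiate the Sym formula \eqref{eq:Sym} across an edge. Writing $\Phi_1 = L\Phi$ and $' = \partial_t$, one gets $\Phi_1^{-1}\Phi_1' = \Phi^{-1}L^{-1}L'\Phi + \Phi^{-1}\Phi'$, and since the trace-free projection commutes with conjugation ($\qIm{\Phi^{-1}X\Phi} = \Phi^{-1}\qIm{X}\Phi$ for quaternionic $\Phi$),
\begin{equation*}
f_1 - f = 2\,\Phi^{-1}\qIm{L^{-1}L'}\Phi,\qquad n_1 + n = \Phi^{-1}\!\left(L^{-1}\qk L + \qk\right)\Phi.
\end{equation*}
Because conjugation by $\Phi$ is an isometry of $\R^3$, the edge-constraint condition $f_1 - f \perp n_1 + n$ is equivalent to the frame-independent identity $\langle \qIm{L^{-1}L'},\, L^{-1}\qk L + \qk\rangle = 0$, and analogously for $M$. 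This is the crucial reduction: the claim no longer depends on the globally only partially consistent frame, but only on the single matrix $L$ (resp.\ $M$).

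Now I would exploit $L = V_1 U$. This exhibits the $L$-edge $f\to f_1$ as the diagonal of a single K-net skew parallelogram with corners $F, F_u, F_v, F_{uv}$: the intermediate vertex $F_u$ comes from the frame $U\Phi$, the fourth vertex $F_v$ is fixed by the Hirota equation, the quad closes by the K-net zero-curvature condition \eqref{eq:ZeroCurvature}, and the cK-normals $n,n_1$ coincide with the K-net normals $N, N_{uv}$ at the ends of the diagonal. Writing $F_{uv}-F = (F_u - F) + (F_{uv}-F_u)$ and expanding $\langle F_{uv}-F,\, N+N_{uv}\rangle$ produces four terms; the two pairings $\langle F_u-F, N\rangle$ and $\langle F_{uv}-F_u, N_{uv}\rangle$ vanish immediately from the A-net property (each quad edge is orthogonal to the normals at both of its endpoints), and the remaining two, $\langle F_u - F, N_{uv}\rangle$ and $\langle F_{uv}-F_u, N\rangle$, become vanishing A-net pairings after applying the skew-parallelogram identities $F_u - F = F_{uv}-F_v$ and $F_{uv}-F_u = F_v - F$. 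The identical argument along the other diagonal handles $M$, so $(f,n)$ is an edge-constraint net for every $t$.

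The main obstacle is step three: one must justify that each cK-edge genuinely is the diagonal of a bona fide K-net quadrilateral even though, as noted in Figure~\ref{fig:eightLoop}, the four quads surrounding a cK-vertex need not share a central vertex, so the factorization is only local and the quad must be reconstructed one edge at a time. One must also confine attention to the quaternionic (real $\delta_i$) regime so that conjugation is genuinely a rotation and $n$ is genuinely $\Stwo$-valued; in the biquaternionic case the same computation goes through formally but the reality of $(f,n)$ must be imposed separately. A reader preferring to avoid the geometric reconstruction can instead verify the identity $\langle \qIm{L^{-1}L'},\, L^{-1}\qk L + \qk\rangle = 0$ directly from \eqref{eq:LaxPair}, at the cost of the same kind of long computation that produced \eqref{eq:LaxEvolution}.
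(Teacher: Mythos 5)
Your proof is correct and takes essentially the same route as the paper's: the paper likewise reduces the edge-constraint to the observation that each edge is the diagonal of a folded (skew) parallelogram whose vertex planes are perpendicular to the normals at the diagonal's endpoints, disposing of the orthogonality via the parallelogram's $180^\circ$ rotational symmetry --- which your four-term expansion using the A-net property and the identities $F_u-F=F_{uv}-F_v$, $F_v-F=F_{uv}-F_u$ simply makes explicit --- and, like you, it defers to a direct algebraic verification for complex $\delta_i$.
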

\begin{proof}
We have seen that $f$ is well defined. In general, that the edge-constraint is satisfied can be checked algebraically. For real valued $\delta_i$ there is a geometric argument: the edges of these nets arise as diagonals of folded parallelograms with vertex planes perpendicular to $n$ at incident vertices. Since folded parallelograms have $180^\circ$ rotational symmetry, the edge-constraint is satisfied.
\end{proof}
Since we can choose the spectral parameter $\lambda=e^t$ freely, the above Lax pair gives rise to a one parameter \emph{associated family} of nets.

\begin{figure}[t]
  \centering
  \includegraphics[width=.5\hsize]{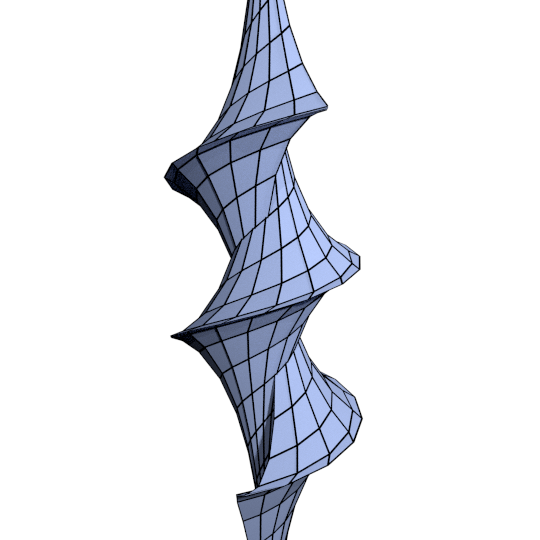}\includegraphics[width=.5\hsize]{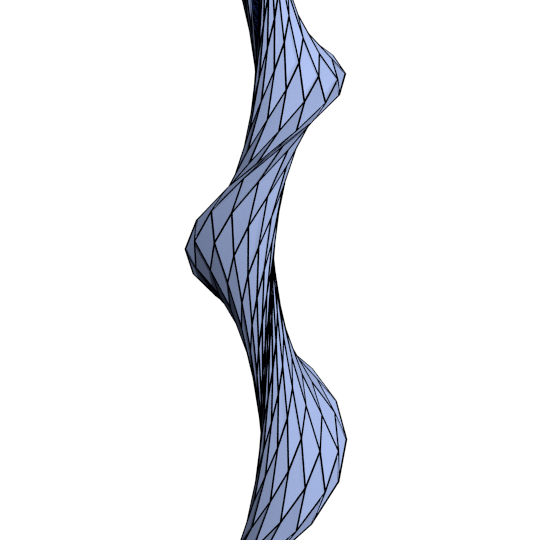}%
  \caption{Two members of the associated family of the pseudosphere. A closed form expression for this family is given by \eqref{eq:pseudoAssocFam}.}
  \label{fig:Pseudo}
\end{figure}

\subsection{cK-nets and their associated families from the Lax representation}
\label{sec:cKGeometry}

Let us investigate some geometric properties of these nets.
In particular, we will now see that the edge-constraint net $f$ with $n$ has
Gau{\ss} curvature $K=-1$ and is circular for $\lambda = 1$, thus giving us a way to generate cK-nets.

\begin{theorem}
\label{thm:ckFromLax}
  The net $f:\Z^2\to\R^3$ that arises from the system
  \eqref{eq:LaxPair} and \eqref{eq:integrabilityCondition} has Gau\ss{}
  curvature $K=-1$ for all values of the spectral parameter $\lambda =
  e^t$. It is circular if $\lambda =1$.
\end{theorem}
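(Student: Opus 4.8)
The plan is to reduce both assertions to algebraic identities in the Lax data, by feeding the Sym formula \eqref{eq:Sym} into the curvature formula \eqref{eq:GaussCurvature} and into a concircularity criterion, respectively. The key mechanism is that $n$ and every edge- or diagonal-difference of $f$ is a conjugate by the frame $\Phi$ of a quantity built from $L$, $M$ and their shifts: writing $\qk = -i\sigma_3$ we have $n = \Phi^{-1}\qk\Phi$, while differentiating $\Phi_i=(\cdot)\Phi$ and using that conjugation commutes with $\qIm{\cdot}$ gives each $f_i-f$ as $\Phi^{-1}(\cdot)\Phi$. Since the scalar gauge $\Phi\mapsto c\Phi$ leaves both $f$ and $n$ unchanged, I may normalize $\det\Phi\equiv1$, so that $q\mapsto\Phi^{-1}q\Phi$ acts on the imaginary quaternions $\R^3$ as an element of $SO(3,\C)$, preserving the (complex-bilinear) inner product, cross product, and $3\times3$ determinant. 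This is what makes the frame drop out of the scale-invariant quantities below, and it keeps the argument valid for the biquaternionic (complex $\delta_i$) case without a separate treatment.

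For the Gauß curvature set $P:=M_1L$, so that $\Phi_{12}=P\Phi$ (the two factorizations $M_1L=L_2M$ agreeing is exactly \eqref{eq:integrabilityCondition}), and introduce the trace-free (bi)quaternions $A:=P^{-1}\qk P-\qk$, $B:=M^{-1}\qk M-L^{-1}\qk L$, $C:=2\qIm{P^{-1}\partial_tP}$ and $D:=2\qIm{M^{-1}\partial_tM}-2\qIm{L^{-1}\partial_tL}$. Expanding \eqref{eq:Sym} yields $n_{12}-n=\Phi^{-1}A\Phi$, $n_2-n_1=\Phi^{-1}B\Phi$, $f_{12}-f=\Phi^{-1}C\Phi$ and $f_2-f_1=\Phi^{-1}D\Phi$. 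Because conjugation preserves determinants and the normal $N$ cancels from the ratio \eqref{eq:GaussCurvature}, I may take the face normal $\Phi N\Phi^{-1}=A\times B$ and obtain
\[
K=\frac{(A\times B)\cdot(A\times B)}{(C\times D)\cdot(A\times B)},
\]
so that the claim $K=-1$ for all $\lambda$ becomes the single scalar identity $(C\times D)\cdot(A\times B)=-(A\times B)\cdot(A\times B)$ in the local variables $s,s_1,s_2,l,m,\lambda$ and $t_i=\tan\frac{\delta_i}2$. I would verify it by computing $A,B,C,D$ from \eqref{eq:LaxPair} (using $\partial_t=\lambda\partial_\lambda$, which only touches the off-diagonal entries), eliminating the shifted variables $l_2,m_1,s_{12}$ via the evolution equations \eqref{eq:LaxEvolution}, and simplifying with the recorded values of $\det L,\det M$; that the result is identically $-1$, independent of $\lambda$, is precisely the constant-curvature-along-the-associated-family statement.

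For circularity at $\lambda=1$ I would invoke the Möbius-geometric criterion that four points of $\R^3$ are concircular iff their quaternionic cross ratio is real. Setting $\alpha:=2\qIm{L^{-1}\partial_tL}$, $\beta:=2\qIm{M^{-1}\partial_tM}$, $\alpha_2:=2\qIm{L_2^{-1}\partial_tL_2}$ and $\beta_1:=2\qIm{M_1^{-1}\partial_tM_1}$, the same conjugation bookkeeping telescopes the cross ratio of $f,f_1,f_{12},f_2$ into
\[
(f-f_1)(f_1-f_{12})^{-1}(f_{12}-f_2)(f_2-f)^{-1}=\Phi^{-1}Q\Phi,\qquad Q=\alpha\,L^{-1}\beta_1^{-1}L\,M^{-1}\alpha_2 M\,\beta^{-1}.
\]
Since conjugation fixes scalars, the cross ratio is real iff $Q$ is a real multiple of $\qOne$, and this is what I would check after using \eqref{eq:LaxEvolution} to express $\alpha_2,\beta_1$ through the base data and then setting $\lambda=1$. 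Geometrically this is the expected outcome: the spectral parameter reparametrizes the asymptotic lines, and only at $t=0$ do the diagonals of the underlying skew-rhombic K-net meet the normals symmetrically, producing the principal curvature net, which is circular.

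The main obstacle in both parts is the sheer volume of symbolic computation: $A,B,C,D$ and $Q$ are rational in $s,s_1,s_2,l,m,\lambda,t_1,t_2$, and the identities only collapse after the shifted variables are substituted, so the verification is best organized (and, if desired, confirmed) by computer algebra. Two finer points deserve care. First, the face normal in \eqref{eq:GaussCurvature} is defined only through $n$, so for $\lambda\neq1$ the $f$-quad need not be planar nor orthogonal to $N$; I therefore phrase the curvature claim as the projected identity $(C\times D)\cdot(A\times B)=-(A\times B)\cdot(A\times B)$ rather than as $A\times B=-(C\times D)$, which would fail off $\lambda=1$. Second, for complex $\delta_i$ the vectors lie in $\R^3\otimes\C=\C^3$ and all products are the complex-bilinear extensions; the $SO(3,\C)$-invariance noted at the outset is exactly what legitimizes the reduction there, so the biquaternionic case needs no separate argument.
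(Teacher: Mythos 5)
Your proposal is correct and follows essentially the same route as the paper's (much terser) proof: a direct computation of the Gau{\ss} curvature via the Sym formula \eqref{eq:Sym} fed into \eqref{eq:GaussCurvature}, and the reality of the quaternionic cross-ratio $(A-B)(B-C)^{-1}(C-D)(D-A)^{-1}$ at $\lambda=1$ for circularity. The frame-conjugation bookkeeping you add (reducing everything to identities in $s,s_1,s_2,l,m,\lambda,t_1,t_2$ after eliminating the shifted variables via \eqref{eq:LaxEvolution}) is exactly how one would organize the ``direct calculation'' the paper invokes.
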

\begin{proof}
  The proof is a direct calculation. Solve the system for one quad and
  look at the curvature as well as the quaternionic
  cross-ratio:
  The net $f$ and its Gau\ss{} map are given by the Sym formula
  \eqref{eq:Sym}. The curvature can then
  be computed by \eqref{eq:GaussCurvature}.  To show the the
  circularity one can utilize the fact that the cross-ratio $cr =
  \frac{a-b}{b-c}\frac{c-d}{d-a}$ of four complex numbers $a,b,c$, and
  $d$ is real if and only if the points are concircular. For four
  points in $\R^3$ given as imaginary quaternions $A,B,C$, and $D$ this
  translates into $(A-B)(B-C)^{-1}(C-D)(D-A)^{-1}\in \R$ -- see, e.g., \cite{Hoffmann:1999vb}. This quantity can be computed to be real for $\lambda=1$.
\end{proof}
\begin{remark}
While in the K-net case all members of the associated family are A-nets (in discrete asymptotic line parametrization) here we have C-nets (discrete curvature line parametrization) when $\lambda=1$. Other values of $\lambda$ still give rise to edge-constraint nets of constant negative Gau\ss{} curvature (as shown in Theorem \ref{thm:ckFromLax}), but in general the quadrilaterals are no longer planar. This is expected since, in the smooth setting, in asymptotic parametrization the associated family maps $\Vert f_x\Vert \to \lambda \Vert f_x\Vert$ and $\Vert f_y \Vert \to \frac1\lambda\Vert f_y\Vert$ and only for strict Chebyshev parametrization ( $\Vert f_x\Vert = \Vert f_y \Vert $ ) do the sum and difference of the asymptotic directions give rise to curvature directions everywhere.
\end{remark}
In fact, every cK-net quad arises from the Lax system \eqref{eq:LaxPair} and \eqref{eq:integrabilityCondition}.

\begin{theorem}
\label{thm:ckIsLax}
Given a circular edge-constraint quadrilateral $f$ with parallel Gau{\ss} map $n$ and Gau{\ss} curvature $K = -1$, then $f$ arises from a Lax representation as in \eqref{eq:LaxPair}.
\end{theorem}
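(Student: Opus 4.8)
The plan is to show that the given circular quadrilateral and its parallel Gauß map can be factored, edge by edge, into the diagonal structure that defines the $L,M$ Lax matrices. The key idea, already flagged in the introduction and in Figure \ref{fig:eightLoop}, is that curvature coordinates are sums and differences of asymptotic ones, so each cK-net edge should be realizable as the diagonal of a folded (skew) parallelogram coming from a K-net matrix. First I would set up the local frame: at the base vertex $f$ choose a frame $\Phi$ adapted to the normal $n$ via the Sym-type normalization $n = -i\Phi^{-1}\sigma_3\Phi$, which pins down $\Phi$ up to a diagonal (hence $s$-type) gauge. This is exactly the freedom that the footnote gauge transformation $G = \quadmatrix{\sqrt s}0 0 {\sqrt s^{-1}}$ exploits, so the vertex variable $s$ is available to absorb this ambiguity.

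Next I would reconstruct the transition matrices across the two edges emanating from $f$. For each edge $(f,f_i)$ the data consist of the two unit normals $n, n_i$ and the edge vector $f_i - f$, subject to the edge-constraint $f_i - f \perp n_i + n$. I would show that these three pieces of geometric data determine a matrix of exactly the form \eqref{eq:LaxPair}: the off-diagonal entries $i(\lambda - ss_i/\lambda)$ and $i(\lambda - 1/(\lambda ss_i))$ encode the rotation carrying $n$ to $n_i$ (an angle fixed by $\angle(n,n_i)$, which plays the role of the curvature-line ``asymptotic angle''), while the diagonal entries, governed by $\cot\frac{\delta_i}2$ and $\tan\frac{\delta_i}2$, encode the edge length. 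The constant Gauß curvature hypothesis $K=-1$ is what forces the edge length and the normal turning to be locked together in precisely the ratio \eqref{eq:LaxPair} demands; concretely I would invert the curvature formula \eqref{eq:GaussCurvature} to read off $\delta_i$ from the ratio of the (signed) areas spanned by $n_2-n_1, n_{12}-n$ versus $f_2-f_1, f_{12}-f$, and check that the resulting matrix is quaternionic (using the length normalization \eqref{eq:ellLength} when $\delta_i$ is complex).

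Having produced candidate matrices $L$ and $M$ on the two edges, the remaining task is to verify that they are genuinely compatible around the single quad, i.e.\ that $M_1 L = L_2 M$ holds so that the fourth vertex $f_{12}$ predicted by the Lax evolution \eqref{eq:LaxEvolution} coincides with the actual fourth vertex of the given quadrilateral. Here I would invoke the circularity hypothesis (the cross-ratio criterion used in the proof of Theorem \ref{thm:ckFromLax}): concircularity of $f,f_1,f_{12},f_2$ is precisely the scalar condition that makes the two a priori independent reconstructions of $f_{12}$ — one from $L_2 M$, one from $M_1 L$ — agree. In other words, Theorem \ref{thm:ckFromLax} says ``Lax data $\Rightarrow$ circular with $K=-1$,'' and I would run the correspondence backwards, using that the matrices of type \eqref{eq:LaxPair} are determined by edge-length and normal data and that the closing relation is equivalent to the real cross-ratio condition.

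The main obstacle I anticipate is the backward direction of this last step, namely showing that the reconstruction is not merely consistent but \emph{unique} of the stated form, since the map from geometric quad data to matrix entries passes through several sign and branch choices ($\delta_i = \arcsin e_i$ can be complex, $\sqrt{s}$ is two-valued, and the face normal $N$ is defined only up to sign). Rather than grind through \eqref{eq:LaxEvolution}, I expect the cleanest route is to argue by dimension count and rigidity: the space of circular $K=-1$ quads with parallel Gauß map has the same dimension as the space of admissible Lax data modulo the $s$-gauge, and the map between them is the explicit one above, so surjectivity (every cK-net quad arises) follows once injectivity of the forward Sym map on a single quad — already established in Theorem \ref{thm:ckFromLax} — is combined with matching the boundary data $f, f_1, f_2, n, n_1, n_2$. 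It then suffices to check that $f_{12}$ and $n_{12}$ are uniquely determined by these boundary data together with circularity and $K=-1$, which is a finite, quad-local verification.
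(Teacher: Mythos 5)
Your proposal lands on essentially the same two-step argument as the paper: (i) determine the Lax matrix data $\delta_i, s_i$ edge-by-edge from the edge length and the angle the edge makes with its incident normals, checking that the radicand in \eqref{eq:ellLength} is nonnegative, and (ii) show that a circular quad with parallel Gau{\ss} map and $K=-1$ is uniquely determined by three vertices and one normal, so the Lax-generated quad of Theorem \ref{thm:ckFromLax} must coincide with the given one --- which is exactly the quad-local verification your final paragraph identifies. One correction to how you would execute step (i): the hypothesis $K=-1$ plays no role in the per-edge determination of $\delta_i$ (in the paper $\sin^2\delta_i = \frac{d^2}{4}+\cos^2\phi$ depends only on the single edge's length $d$ and normal angle $\phi$), so there is no per-edge ``locking'' to extract by inverting the curvature formula, and no need to verify $M_1L=L_2M$ via the cross-ratio criterion --- the curvature and circularity hypotheses are consumed entirely in the fourth-vertex uniqueness step.
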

\begin{proof}

The proof follows in two steps. First we show that cK-net quads are described by Cauchy data, it is uniquely determined by three vertices and one normal. Then, we show how to explicitly determine the parameters of the Lax matrices \eqref{eq:LaxPair} from two meeting edges and a normal, i.e., the same data. Therefore, the given cK-net quad and the one arising from the Lax pair evolution equations \eqref{eq:LaxEvolution} must coincide.

$\left. 1\right)$ Given a circular quad with vertices circumfencing radius $r$, $f
  =r e^{i\phi}, f_1 =r e^{i\phi_1}$, and $f_{12} = re^{i\phi_{12}}$
  and an initial normal $n =
  (\sin\alpha\cos\beta,\sin\alpha\sin\beta,\cos\alpha)$ one can
  calculate parallel normals $n_1,n_2,n_{12}$ by the condition that
  $n_1 -n \parallel f_1-f$ and likewise on the other edges. The
  Gau{\ss} curvature can then be found to be
  $$ K = \frac{\sin^2(\alpha) \csc \left(\frac{1}{2} (\phi-{\phi_1}+{\phi_{12}}-{\phi_2})\right) \sin
    \left(\frac{1}{2} (4 \beta-3 \phi-{\phi_1}+{\phi_{12}}-{\phi_2})\right)}{r^2}.$$
  Solving for $\phi_{12}$ one finds
  $$\phi_{12} = \textstyle
  2 \arctan\left(\frac{-r^2 \csc ^2(\alpha) \sin \left(\frac{1}{2}
        (\phi-{\phi_1}-{\phi_2})\right)-\sin \left(\frac{1}{2} (4 \beta-3
        \phi-{\phi_1}-{\phi_2})\right)}{r^2 \csc ^2(\alpha) \cos \left(\frac{1}{2}
        (\phi-{\phi_1}-{\phi_2})\right)+\cos \left(\frac{1}{2} (4 \beta-3
        \phi-{\phi_1}-{\phi_2})\right)}\right)
  $$
  or
  $$
  f_{12} = r e^{-i (\phi-{\phi_1}-{\phi_2})} \frac{r^2 +\sin^2(\alpha)e^{2 i(
      \phi-\beta)}}{r^2 + \sin^2(\alpha)e^{-2 i (\phi-\beta)}}.
  $$
  So given three initial points and a normal at the middle one there are
  a unique fourth vertex and unique normals at the remaining points that
  furnish a circular net with parallel normals that has Gau{\ss} curvature minus one.

$\left.2\right)$ Let $f_i-f$ be an edge with length $d$, dihedral angle $\delta_i$ (possibly
  non-real) that corresponds to the associated virtual edges, and
  $\phi$ be the angle the edge makes with its incident normals, then
  \begin{equation}
  \sin^2\delta_i = \frac{d^2}{4} + \cos^2\phi.
  \end{equation}
  We also find that
    \begin{equation}
    \begin{split}
        d^2 &= 4 \cos^2(\frac{\rho + \rho_i}{2}) \sin^2\delta_i \quad \mathrm{and}  \\
         \cos(\phi) &= - \sin(\frac{\rho + \rho_i}{2}) \sin \delta_i,
    \end{split}
    \end{equation}
  where $s = e^{i \rho}$ and each $s_i = e^{i \rho_i}$.
  These yield:
  \begin{equation}
  \begin{split}
  \delta_i &= \arcsin\sqrt{\frac{d^2}{4} + \cos^2\phi} \\
  \rho_i &= -\rho + 2\arcsin(-\cos(\phi) \csc(\delta_i)).
  \end{split}
  \end{equation}

  This gives a way to calculate both $\delta_i$ and $\rho_i$ (and thus
  $s_i$) from the edge length $d$ and the angle with its normals
  $\phi$, after an initial choice of $s = e^{i\rho}$.

The final degree of freedom in the Lax matrix is the argument of the edge variable $l$ (or $m$), which encodes the rotation of the edge about one of its incident normals. This can be chosen to align the edge arising from the Sym-formula Lax matrix with the specified edge. The above formulas for the entries ensure that the radicant in
  the formula \eqref{eq:ellLength} for the length of the edge variable $l$ is
  nonnegative. We find the following equivalent inequalities:
  \begin{equation}
	\begin{split}
	0 &\leq \frac{\cos(\frac{\rho + \rho_i}{2} - \arg\tan\frac{\delta_i}2)}{\cos(\frac{\rho+\rho_i}{2} + \arg\tan\frac{\delta_i}2)}\\
	\iff 0 &\leq d^2 \cos^2(\arg\tan\frac{\delta_i}2) - \cos^2\phi \sin^2(\arg\tan\frac{\delta_i}2)\\
	\iff 0 &\leq 4 \sin^2\phi.
	\end{split}
  \end{equation}
The last inequality (found using that $\sec^2(\arg\tan\frac{\delta_i}2) = \sin^2\delta_i$) is clearly satisfied, so the radicant is nonnegative.
Therefore, we get Lax matrices of the correct form for each edge.

\end{proof}

\subsection{B\"acklund transformations of cK-nets}
\label{sec:BT}
A smooth B\"acklund transformation is very geometric and characterized by the conditions that (i) corresponding points lie in their respective tangent planes, (ii) are in constant distance, and (iii) that corresponding normals form a constant angle. Furthermore, asymptotic lines and curvature lines are preserved.

The discrete B\"acklund transformation for discrete K-nets in asymptotic para\-metri\-zation (Definition \ref{def:knetBT}) is also characterized by these conditions and preserves the discrete asymptotic parametrization (note that there is some condition on the data, like the distance must equal the sine of the angle the normals make).

In this section we introduce a B\"acklund transformation for discrete K-nets in curvature line parametrization (cK-nets), which is characterized by the same geometric conditions and preserves the discrete curvature line parametrization. The B\"acklund transformation also carries over to the associated family of the cK-nets, generating edge-constraint nets of constant negative Gau{\ss} curvature in more general types of parametrizations.

Algebraically, a single B\"acklund transformation $(\tilde f, \tilde n)$ of a cK-net $(f,n)$ is determined by multiplying its frame $\Phi$ by one of the $U,V$ K-net matrices depending on a B\"acklund parameter $\alpha$ and the same spectral parameter $\lambda$, e.g., $\tilde \Phi(k,\ell,\lambda) = U(\alpha, \lambda) \Phi(k,\ell,\lambda)$. The resulting frame can then be integrated via the Sym-Bobenko formula \eqref{eq:Sym}. As each cK-net frame factors into a sequence of K-net matrices, existence of B\"acklund transformations and a Bianchi permutability theorem follow from the corresponding theorems for K-nets given at the end of Section \ref{sec:K-nets}.

\begin{figure}[t]
  \centering
  \includegraphics[width=\hsize]{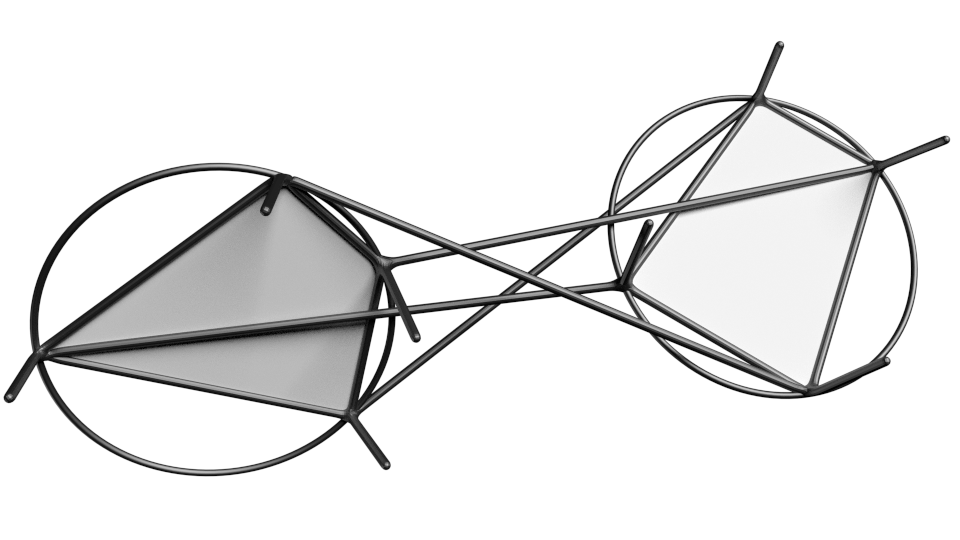}
  \caption{A cK-net quad with its B\"acklund transform.}
  \label{fig:cube}
\end{figure}

\begin{theorem} Let $f:\Z^t \to \R^3$ with Gau{\ss} map $n$ and Gau{\ss} curvature $K=-1$ be a cK-net. Then (up to a global degree of freedom fixing an initial normal)
\begin{enumerate}
\item For every angle $\alpha \neq 0 \in (-\pi, \pi)$ there exists
  a unique cK-net $\tilde f$ with
  Gau\ss{} map $\tilde n$ such that $\Vert \tilde f - f\Vert =
  \sin\alpha$, $\angle(n,\tilde n) = \alpha$, and $(\tilde f - f)\perp n,\tilde n$. The nets $\tilde f$ are called the \emph{B\"acklund transforms} of $f$.
\item  For every pair of B\"acklund transforms $\hat f$ and $\tilde f$ with
  parameters $\hat \alpha$ and $\tilde \alpha$, respectively, there exists a
  unique cK-net $\hat{\tilde f}$ that is a
  $\tilde\alpha$-B\"acklund transform of $\hat f$ as well as a
  $\hat\alpha$-B\"acklund transform of $\tilde f$.
\end{enumerate}
\begin{proof}
The result follows from the corresponding theorem for asymptotic K-nets by factoring the cK-net frame into a sequence of K-net matrices.

For completeness we now describe the evolution equations for the Lax matrix variables $s, l, m$ to $\tilde s, \tilde l, \tilde m$.
For B\"acklund matrices $U$, solving $U_1 L = \tilde L U$ and $U_2 M = \tilde M U$ yields
\begin{equation}
\label{eq:singleBTEvolution}
\begin{split}
\tilde s_1 &= \frac{\tilde s \sin (\alpha ) \left(s s_1+t_1^2\right)-l t_1 \left(\left(s s_1-1\right) \cos (\alpha )+s s_1+1\right)}{l
   \left(l \sin (\alpha ) \left(s s_1 t_1^2+1\right)+\tilde s t_1 \left(\left(s s_1-1\right) \cos (\alpha )-s s_1-1\right)\right)}\\
\tilde l &= \frac{s \left(\tilde s-l t_1 \cot \left(\frac{\alpha }{2}\right)\right)}{l-\tilde s t_1 \cot \left(\frac{\alpha }{2}\right)}\\
\tilde s_2 &= \frac{\tilde s \sin (\alpha ) \left(s s_2+t_2^2\right)-m t_2 \left(\left(s s_2-1\right) \cos (\alpha )+s s_2+1\right)}{m \left(m \sin (\alpha )
   \left(s s_2 t_2^2+1\right)+\tilde s t_2 \left(\left(s s_2-1\right) \cos (\alpha )-s s_2-1\right)\right)}\\
\tilde m &= \frac{s \left(\tilde s-m t_2 \cot \left(\frac{\alpha }{2}\right)\right)}{m-\tilde s t_2 \cot \left(\frac{\alpha }{2}\right)}
\end{split}
\end{equation}
That $\tilde s_{12} = \tilde s_{21}$ follows from $s_{12} = s_{21}$. Therefore, up to an initial choice of $\tilde s = e^{i \theta}$ at one point fixing an initial normal vector, the evolution is uniquely determined.

The immersion and Gau{\ss} map of the transform are given by
\begin{equation}
\label{eq:singleBTImmGauss}
\begin{split}
\tilde f(k,\ell) &= f(k,\ell) + \sin(\alpha) \qIm{	\Phi^{-1}(k,\ell) \quadmatrix{0}{i \frac{s(k,\ell)}{\tilde s(k,\ell)}}{i \frac{\tilde s(k,\ell)}{s(k,\ell)}}{0} \Phi(k,\ell)} \quad \mathrm{and} \\
\tilde n(k,\ell) &= - i \Phi^{-1}(k,\ell) U^{-1}(k,\ell) \sigma_3 U (k,\ell) \Phi(k,\ell),
\end{split}
\end{equation}
so from an initial cK-net we only require the $\tilde s$ variables to describe its B\"acklund transform.
\end{proof}
\end{theorem}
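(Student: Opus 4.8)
The plan is to reduce both statements to the corresponding existence and permutability theorems for asymptotic K-nets stated at the end of Section \ref{sec:K-nets}, exploiting the fact that every cK-net frame factors into a sequence of K-net matrices. Recall that the cK-net Lax matrix $L = V_1 U$ is a product of two K-net matrices, so a cK-net lattice step in the first direction is a composition of a $V$-step and a $U$-step of an underlying K-net, and likewise for $M$ in the second direction; the cK-net vertices are the diagonals of K-net quads (Figure \ref{fig:eightLoop}). The single B\"acklund transformation is, at the level of frames, nothing more than left multiplication by one additional K-net matrix $U(\alpha,\lambda)$ carrying the B\"acklund angle $\alpha$ and the same spectral parameter $\lambda$, exactly as in Definition \ref{def:knetBT}. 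The cleanest way to organize the argument is to regard this extra factor as one further lattice direction of a multidimensionally consistent K-net system, so that the cK-net B\"acklund configuration becomes a sub-configuration of ordinary K-net cubes.

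For part 1 I would proceed as follows. First, set $\tilde\Phi = U(\alpha,\lambda)\Phi$ and transport $U$ across the lattice by solving $U_1 L = \tilde L U$ and $U_2 M = \tilde M U$; this is the compatibility encoded by the single-B\"acklund evolution \eqref{eq:singleBTEvolution}. The crucial check is that the conjugated matrices $\tilde L = U_1 L U^{-1}$ and $\tilde M = U_2 M U^{-1}$ are again of the cK-net form \eqref{eq:LaxPair}, i.e. that there exist a unitary $\tilde s$ and edge variables $\tilde l,\tilde m$ realizing them; this is precisely what \eqref{eq:singleBTEvolution} produces, and the mutual compatibility $\tilde s_{12}=\tilde s_{21}$ is inherited from the cK-net closing relation $s_{12}=s_{21}$. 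Because the transformed matrices are of cK-net form, Theorem \ref{thm:ckFromLax} immediately furnishes $\tilde f$ as a cK-net (circular, $K=-1$) at $\lambda = 1$. It then remains to verify the three geometric conditions from the Sym formula \eqref{eq:singleBTImmGauss}: that $\tilde f - f = \sin\alpha\,\qIm{\Phi^{-1} Q \Phi}$ with $Q$ a unit imaginary (bi)quaternion gives $\Vert\tilde f - f\Vert = \sin\alpha$; that $\tilde n = -i\Phi^{-1}U^{-1}\sigma_3 U\Phi$ makes the angle between $n$ and $\tilde n$ equal to $\alpha$ at $\lambda = 1$ (the angle being read off from the K-net matrix $U(\alpha,1)$); and that $\tilde f - f$ is orthogonal to both $n$ and $\tilde n$. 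Each of these is a short quaternionic computation using the explicit form of $U(\alpha,\lambda)$ and the unitarity of $s,\tilde s$.

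Uniqueness in part 1 and the entirety of part 2 then follow structurally. The geometric data --- distance $\sin\alpha$, angle $\alpha$, and the orthogonality --- pin down the B\"acklund K-net matrix $U(\alpha,\lambda)$ up to the single global choice of initial normal, equivalently the initial value $\tilde s = e^{i\theta}$; since \eqref{eq:singleBTEvolution} then determines $\tilde s$ (and $\tilde l,\tilde m$) everywhere, the transform is unique. For the permutability statement I would invoke the Bianchi permutability theorem for K-nets: two B\"acklund factors $U(\hat\alpha,\lambda)$ and $U(\tilde\alpha,\lambda)$ admit a unique pair of completing K-net matrices closing the Bianchi quadrilateral at the frame level, and projecting through the Sym formula yields the unique common cK-net $\hat{\tilde f}$. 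Again $\tilde s_{12}=\tilde s_{21}$ guarantees the construction is well defined on the whole lattice.

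The main obstacle is the ``unusual'' structure of the B\"acklund cube noted in the introduction: its vertical (B\"acklund) edges carry single K-net matrices $U(\alpha,\lambda)$ while its horizontal (cK-net) edges carry the products $L=V_1U$ and $M$, so the top/bottom and side faces are governed by different equations and one cannot simply quote a standard 3D-consistent cube. The resolution I would pursue is to avoid this mismatched cube directly and instead lift to the multidimensionally consistent K-net system, where $U(\alpha,\lambda)$ is just one more K-net factor among the others; the cK-net B\"acklund configuration is then a face-diagonal sub-configuration of genuine K-net cubes, and both existence/uniqueness and permutability descend from the already-established multidimensional consistency (equivalently, the 3D consistency of the discrete Moutard equation for the Gau\ss{} map). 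Verifying that the conjugation $U_1 L U^{-1}$ closes back into cK-net form --- the algebraic heart of \eqref{eq:singleBTEvolution} --- is where the real work sits; the geometric conditions, by contrast, are routine once the frame-level picture is in place.
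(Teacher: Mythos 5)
Your proposal is correct and follows essentially the same route as the paper: factor the cK-net frame into K-net matrices, realize the Bäcklund transformation as left multiplication by a single K-net matrix $U(\alpha,\lambda)$, derive the transformed variables from $U_1 L = \tilde L U$ and $U_2 M = \tilde M U$ (the content of \eqref{eq:singleBTEvolution}), and import existence, uniqueness (up to the initial normal $\tilde s = e^{i\theta}$), and Bianchi permutability from the corresponding K-net theorems. Your extra discussion of lifting the mismatched compatibility cube into the multidimensionally consistent K-net system is a sound elaboration of the same idea, not a different argument.
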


\begin{remark}
The B\"acklund transformation works for nets $(f^\lambda, n^\lambda)$ in the associated family with spectral parameter $\lambda = e^t$, giving rise to more generally parametrized edge-constraint nets of constant negative Gau{\ss} curvature. The equations are given by \eqref{eq:singleBTImmGauss}, with $(f,n)$ replaced by $(f^\lambda, n^\lambda)$, the frame $\Phi$ replaced with $\Phi^\lambda$, and the constant distance $\sin\alpha = \| \tilde f - f \|$ replaced by $\frac{\sin\alpha}{\cosh t - \cos\alpha \sinh t} = \|\tilde f^\lambda - f^\lambda\|$. Also, the constant angle between normals is given by $\arccos\frac{\cos \alpha \cosh t - \sinh t}{\cosh t - \cos\alpha \sinh t}$, so the relationship $\| \tilde f^\lambda - f^\lambda \|^2 + (\tilde n \cdot n)^2 = 1$ still holds.

As in the smooth setting, the B\"acklund parameter $\alpha$ and spectral parameter $\lambda$ are related; setting $\lambda = 1$ and varying $\alpha$ generates a family of cK-net surfaces, conversely, fixing $\alpha$ and varying $\lambda$ generates a similar family of surfaces in more general parametrizations. For an explicit example see the remarks after Theorem \ref{thm:singleBT}.
\end{remark}

\begin{example}
  The construction of the Pseudosphere given at the start of Setion \ref{sec:cK-nets} is in fact a B\"acklund
  transformation of the straight line (details can be found in Section \ref{sec:lineSingleBT}).  Figure~\ref{fig:Kuen} shows a
  discrete Kuen surface; it arises as a B\"acklund transformation of
  the Pseudosphere. Shown in Figure~\ref{fig:clock} are the B\"acklund
  transformations of the Pseudosphere aligned by their angular
  parameter. Since the B\"acklund transformation is invertible one
  finds both the straight line and the Kuen surface therein.
\end{example}

\begin{figure}[t]
  \centering
  \includegraphics[width=.5\hsize]{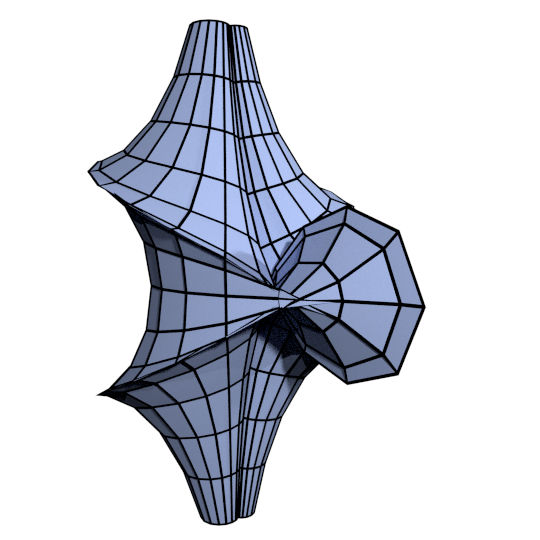}
  \caption{A discrete circular Kuen surface. The coordinate polygons in one direction are planar.}
  \label{fig:Kuen}
\end{figure}

\subsubsection{Double B\"acklund transformations and a remark on multidimensional consistency}
The 3D compatibility cube arising from a quadrilateral of a cK-net $f$ together with its B\"acklund transform $\tilde f$ is not the usual 3D consistency cube, since the equation on the side quadrilaterals is different from that on the top/bottom pair of quadrilaterals. Figure~\ref{fig:cube} shows such a cube together with normals.
  However, if $\tilde f$ is a double B\"acklund transform with parameters $\alpha$ and $-\alpha$ the resulting cube is a familiar 3D consistent cube, as it is circular on all sides. This observation immediately gives the following result.
\begin{corollary}
  The Kuen surface that arises from the Pseudosphere with parameter
  $\frac\pi2$ has planar coordinate polygons in one lattice direction.
\end{corollary}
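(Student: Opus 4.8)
The plan is to realize the Kuen surface as the single $\frac{\pi}{2}$-B\"acklund transform $\tilde f$ of the pseudosphere $(f,n)$ and to extract planarity from the circular $3$D-consistent cube produced by the double B\"acklund transform with parameters $\frac{\pi}{2}$ and $-\frac{\pi}{2}$ described in the preceding paragraph (see Figure \ref{fig:cube}), combined with the abundant reflection symmetry of the pseudosphere of revolution. Since a single B\"acklund transform of a cK-net is again a cK-net, $\tilde f$ is already circular; the extra input needed is a symmetry that forces one coordinate family into planes.

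First I would record the relevant symmetry. The pseudosphere of Remark \ref{rem:pseudoFormulas} is a surface of revolution about the $x$-axis, so for every meridian plane through the axis the corresponding reflection is an orientation-reversing isometry carrying $(f,n)$ to itself up to the induced relabeling $\ell \mapsto -\ell$ of the rotational index. Fix the reflection $S$ across the plane through the base vertex $f(0,0)=(0,1,0)$, whose normal $n(0,0)=(1,0,0)$ also lies in that plane, and choose the initial B\"acklund direction $v \perp n(0,0)$ inside it. Because $S$ reverses orientation it flips the sign of the signed B\"acklund angle while preserving the unsigned data $\Vert\tilde f - f\Vert = 1$ and $\angle(n,\tilde n)=\frac{\pi}{2}$; hence $S$ interchanges the $+\frac{\pi}{2}$-transform and the $-\frac{\pi}{2}$-transform. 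Applying $S$ to the whole Bianchi configuration of part (2) of the B\"acklund theorem and invoking uniqueness of the common transform then shows that the double transform $\hat{\tilde f}$ is itself $S$-invariant, so its polygon in the fixed meridian lies in that plane and is planar. To promote this single polygon to an entire family I would use the full pencil of meridian reflections together with the rotational equivariance of the construction: rotating $v$ about the axis rotates the whole transform, so a planar meridian polygon occurs, suitably rotated, for every longitude, and these planes form the expected pencil through the axis, exactly as for the smooth Kuen surface.

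The main obstacle is the last passage. The clean symmetry argument most directly controls the $S$-symmetric double transform and yields planarity there, whereas the corollary asserts planarity of an entire coordinate family of the single transform $\tilde f$; bridging this requires locating $\tilde f$ as the intermediate layer inside the circular double cube and transferring the pencil structure onto it. I expect the most reliable route to close this gap is the direct one: specialize the closed-form B\"acklund formulas for the straight line and pseudosphere derived in Section \ref{sec:lineSingleBT} to $\alpha = \frac{\pi}{2}$, where $\cos\alpha = 0$, $\sin\alpha = 1$, and $\cot\frac{\alpha}{2}=1$ collapse the evolution equations \eqref{eq:singleBTEvolution}, and then verify that each polygon in the distinguished lattice direction satisfies a common linear equation (the discrete analogue of the smooth fact that $X/Y$ is independent of the second parameter along these lines). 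The only genuine difficulty there is bookkeeping with the resulting $\sech$ and $\tanh$ closed forms, which is routine but lengthy.
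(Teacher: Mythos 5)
Your argument does not close, and the gap you flag at the end is in fact fatal to the route you chose. You center the circular $3$D-consistency cube on the pseudosphere, taking its double B\"acklund transform with parameters $\pm\frac{\pi}{2}$; but the Kuen surface is the \emph{intermediate} (single-transform) layer of that configuration, while the circular side faces of the double-transform cube join the pseudosphere directly to the double transform and never touch the Kuen net. The reflection/rotation argument therefore only controls meridian polygons of the double transform of the pseudosphere, which is neither the Kuen surface nor (once an initial direction $v$ is fixed) a surface of revolution, so the ``rotational equivariance'' step does not produce planarity of a coordinate family of $\tilde f$. The idea you are missing is the one the paper's proof turns on: the pseudosphere is itself the $-\frac{\pi}{2}$-B\"acklund transform of the \emph{straight line} (this is exactly the Example preceding the corollary and Section \ref{sec:lineSingleBT}). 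Hence the Kuen surface is a double B\"acklund transform of the line with opposite parameters $-\frac{\pi}{2}$ and $+\frac{\pi}{2}$, and the compatibility cubes between the line and the Kuen net are circular on \emph{all} faces. For fixed $\ell$ the side quadrilaterals with vertices $f_0(k,\ell)$, $f_0(k+1,\ell)$, $f_{\mathrm{Kuen}}(k+1,\ell)$, $f_{\mathrm{Kuen}}(k,\ell)$ form a strip of planar quads each containing a segment of one and the same straight line; consecutive quads share a transversal edge whose Kuen endpoint lies off the line, so each pair of consecutive planes coincides with the unique plane spanned by the line and that shared vertex. The whole strip, and with it the Kuen coordinate polygon $k\mapsto f_{\mathrm{Kuen}}(k,\ell)$, is therefore planar --- no symmetry of the pseudosphere and no computation with the closed forms is needed.

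Your proposed fallback (specializing the explicit formulas of Theorem \ref{thm:singleBT} and the double-transform formulas to $\alpha=\pm\frac{\pi}{2}$ and checking a common linear relation along each polygon) would indeed verify the statement, but as written it is only a plan; you have not exhibited the linear equation nor checked it, so it cannot be counted as a proof either.
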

\begin{proof}
  Since the Pseudosphere arises as a B\"acklund transform from the
  straight line with parameter $-\frac\pi2$, the Kuen surface can be
  viewed as a double B\"acklund transform of the line that is formed
  by cubes with circular sides. Thus the line and all parameter
  polygons of the Kuen net in one direction are sides of a strip of
  circular quadrilaterals, which clearly must be planar (since it
  contains the straight line in its border).
\end{proof}

\begin{figure}[t]
  \centering
  \includegraphics[width=.49\hsize]{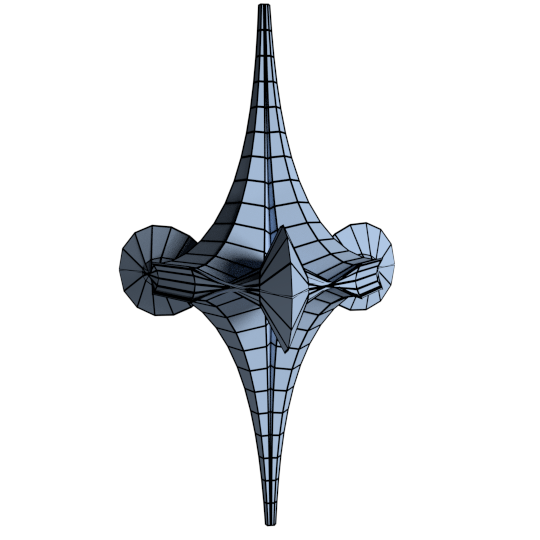}
  \includegraphics[width=.49\hsize]{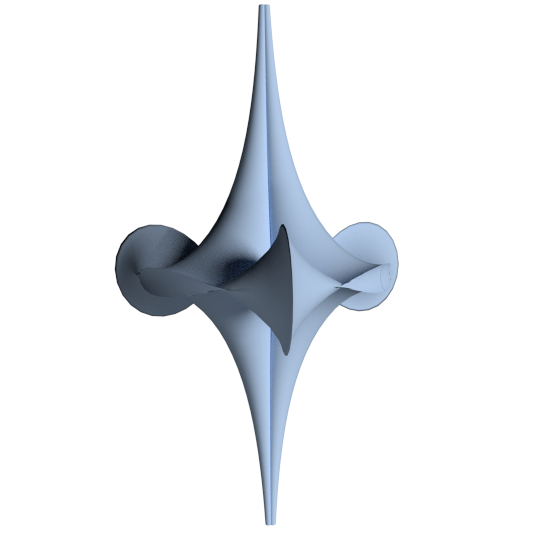}
  \caption{A nonfactorizable double B\"acklund transform of the vacuum sided (left) with the corresponding analytic solution (right). The coordinate polygons in one direction are planar.}
  \label{fig:complexBT}
\end{figure}

Double B\"acklund transformations with real parameters $\alpha$ and
$-\alpha$ as above can clearly be represented by multiplying the frame by a Lax matrix of the
type of $L$ or $M$, factorizable into a product of quaternionic K-net matrices. However, recall (see the discussion around \eqref{eq:ellLength}) that the matrices $L,M$ are more general and allow for $\alpha$ to be complex valued. Geometrically one can think of this as follows: For a single B\"acklund transformation the distance
of the transformed points to their preimages in $\R^3$ must be $\sin\alpha$. Thus, if the distance is larger than one, the angle
$\alpha$ is no longer real and so neither is the transformed surface. However
with a second B\"acklund transformation one can achieve a real
solution again.  These double B\"acklund transformations that do not
factor into two real ones are in fact well known in the continuous
case. Figure~\ref{fig:complexBT} shows an example, a so-called \emph{breather} solution (the name stems from the behavior of the corresponding solution to the sine-Gordon equation).

Schief has described these B\"acklund transformations in \cite{Schief:2003ug} in the realm of circular nets but was not able to get the single step transformations since they do not give rise to circular 3D compatibility cubes.

\subsection{Explicit discrete parametrizations for transformations of the straight line}
We now present closed form equations for some B\"acklund transformations of the straight line, together with their associated families. As in the smooth setting, single transformations give rise to Beltrami's pseudosphere (Figure \ref{fig:Pseudo}) and Dini's surfaces (Figure \ref{fig:DiniCompare}), while double transformations (with opposite parameters $\pm \alpha$) give rise to breather surfaces (Figure \ref{fig:complexBT}) and Kuen's surface (Figure \ref{fig:Kuen}).

\subsubsection{Discrete Dini's surfaces}
\label{sec:lineSingleBT}
The straight line (notated with a zero subscript) can be represented by edge and vertex functions $l_0(k,\ell) = m_0(k,\ell) = s_0(k,\ell) = (-1)^\ell$ plugged into the Lax matrices \eqref{eq:LaxPair} and integrated via the Sym-Bobenko formula \eqref{eq:Sym}. For simplicity throughout this section we assume that the parameter line functions $\delta_1(k), \delta_2(\ell)$ are constants. Explicitly, the corresponding immersion and Gau{\ss} map $(f_0, n_0)$ with spectral parameter $\lambda = e^t$ are given by
\begin{equation}
\label{eq:straightLine}
\begin{split}
&f_0(k,\ell,t) = \begin{bmatrix}-2 x(k,\ell,t)\\ 0\\ 0 \end{bmatrix} \quad \mathrm{with} \quad 
n_0(k,\ell,t) = \begin{bmatrix}0\\ \Im \omega(k,\ell,t)\\ \Re\omega(k,\ell,t) \end{bmatrix}, \mathrm{where} \\
&x(k,\ell,t) = k \left(\frac{\cosh t \sin \delta
   _1}{1+\sinh ^2 t \sin ^2\delta _1} \right)
   +
  \ell \left(
   \frac{\sinh t \sin \delta _2 \cos \delta
   _2}{1+\sinh ^2 t \sin ^2\delta _2} \right) \quad \mathrm{and}\\
&\omega(k,\ell,t) = \left(\frac{i + \sinh t \sin \delta
   _1}{i -\sinh t \sin \delta _1}\right)^k
  \left( \frac{i + \cosh t \tan \delta _2}{i -\cosh t \tan
   \delta _2}\right)^\ell.
\end{split}
\end{equation}
When $t = 0$ we recover a degenerate cK-net of a familiar form
\begin{equation}
f_0(k,\ell) = \begin{bmatrix}-2 k \sin \delta_1\\0\\0\end{bmatrix} \quad \mathrm{with} \quad 
n_0(k,\ell) = \begin{bmatrix}0\\ - \sin(2 \ell \delta_2)\\ \cos (2 \ell \delta_2)\end{bmatrix}.
\end{equation}

The single B\"acklund transformation of the straight line can be solved in full generality in closed form; the derivation was performed using a mixture of hand and symbolic computation. Throughout this section we denote by subscript b the single B\"acklund transform of the straight line. For an initial choice $s_b(0,0) = e^{i \theta}, \theta \neq 0 \in (-\pi,\pi)$, the evolution recursion formulas \eqref{eq:singleBTEvolution} can be solved yielding
\begin{equation}
\label{eq:singleBTRecursion}
\begin{split}
s_b(k,\ell) &= (-1)^\ell \left( 	-1 + \frac{2}{1 - i e^{\chi(k,\ell)}}	\right), \quad \mathrm{where} \\
\chi(k,\ell,\alpha) &= 
\log \tan \frac{\theta}{2} +
k \log \frac{\sin \alpha +\sin \delta _1}{\sin \alpha -\sin \delta _1} -
\ell \log \frac{\sin \left(\alpha -\delta _2\right)}{\sin \left(\alpha +\delta _2\right)}.
\end{split}
\end{equation}
Solving for the immersion and Gau{\ss} map explicitly using \eqref{eq:singleBTImmGauss} we arrive at the following theorem.
\begin{theorem}
\label{thm:singleBT}
The B\"acklund transformation of the straight line ($f_0$ and $\omega$ as in \eqref{eq:straightLine}) with parameter $\alpha \in (-\pi,\pi)$, together with its associated family with spectral parameter $\lambda = e^t \in \R$, is given by
\begin{equation}
\begin{split}
f_b(k,\ell,\alpha,t) &= f_0(k,\ell,t) + \frac{\sin\alpha}{\cosh t - \cos \alpha \sinh t} \begin{bmatrix}\tanh \chi \\ -\sech\chi \Re \omega \\ \sech\chi \Im \omega \end{bmatrix} \mathrm{with} \\
n_b(k,\ell,\alpha,t) &= \frac{\sin\alpha}{\cosh t - \cos \alpha \sinh t} \begin{bmatrix}\sech \chi \\ \Im\hat\omega \\ \Re\hat\omega \end{bmatrix},
\end{split}
\end{equation}
with $\chi$ defined as in \eqref{eq:singleBTRecursion} and for simplicity we set
\begin{equation}
\begin{split}
\hat \omega &= \omega (i \tanh \chi + (\cosh t \cot \alpha - \csc \alpha \sinh t)).
\end{split}
\end{equation}
\end{theorem}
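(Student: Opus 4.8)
The plan is to treat this as a direct (if lengthy) computation: once an explicit frame $\Phi$ for the straight line is in hand, the transform $(f_b,n_b)$ is obtained by feeding the already-solved recursion \eqref{eq:singleBTRecursion} into the Sym-type formulas \eqref{eq:singleBTImmGauss} and simplifying. So the first task is to build $\Phi$ explicitly, and the remaining work is bookkeeping.

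First I would specialize the Lax matrices \eqref{eq:LaxPair} to the straight-line data $s_0=l_0=m_0=(-1)^\ell$ with constant $\delta_1,\delta_2$. Because a shift in the first direction fixes $\ell$ while a shift in the second flips the sign of $(-1)^\ell$, every vertex/edge variable enters either as a ratio or as an even monomial, so all $(-1)^\ell$ factors cancel and $L_0,M_0$ collapse to \emph{constant} matrices in $\spann\{\qOne,\sigma_1\}$; writing $\lambda=e^t$ one finds $L_0=\frac{2}{\sin\delta_1}\qOne+2i\sinh t\,\sigma_1$ and $M_0=2\cot\delta_2\,\qOne+2i\cosh t\,\sigma_1$. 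These commute, so $\Phi(k,\ell,t)=L_0^{\,k}M_0^{\,\ell}\Phi_0$, and simultaneously diagonalizing $\sigma_1$ (eigenvectors $\frac1{\sqrt2}(1,\pm1)$) turns the powers into scalar exponentials whose eigenvalue ratios are precisely the two base factors appearing in $\omega$. Running this $\Phi$ through the Sym formula \eqref{eq:Sym} should reproduce $f_0,n_0$ of \eqref{eq:straightLine}; this both validates the frame and identifies $\omega$ as the accumulated eigenvalue phase.

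Next I would confirm that \eqref{eq:singleBTRecursion} really solves \eqref{eq:singleBTEvolution} on this background. Since $\chi$ is affine in $(k,\ell)$, the increments $\chi_1-\chi$ and $\chi_2-\chi$ are the constants $\log\frac{\sin\alpha+\sin\delta_1}{\sin\alpha-\sin\delta_1}$ and $-\log\frac{\sin(\alpha-\delta_2)}{\sin(\alpha+\delta_2)}$, so substituting the M\"obius form $s_b=(-1)^\ell(-1+\frac{2}{1-ie^\chi})$ reduces each recursion to a single algebraic identity in $e^{\chi_i-\chi}$, which I would check directly (well-definedness, i.e.\ $\tilde s_{12}=\tilde s_{21}$, is already guaranteed by the preceding theorem). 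With $\Phi$, $s=(-1)^\ell$, and $\tilde s=s_b$ all explicit, I would then substitute into \eqref{eq:singleBTImmGauss}: form $\Phi^{-1}\quadmatrix{0}{i\frac{s}{\tilde s}}{i\frac{\tilde s}{s}}{0}\Phi$ and $\Phi^{-1}U^{-1}\sigma_3U\Phi$, extract trace-free (resp.\ imaginary-quaternion) parts, and simplify. The ratio $s/\tilde s$ is rational in $e^\chi$ and collapses into $\tanh\chi$ and $\sech\chi$, while conjugation by the $\sigma_1$-diagonalized $\Phi$ reconstitutes $\Re\omega,\Im\omega$; the $t$-dependence of $L_0,M_0$ produces the common scalar factor $\frac{\sin\alpha}{\cosh t-\cos\alpha\sinh t}$, matching the associated-family distance recorded in the remark after the B\"acklund theorem.

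The main obstacle is the final trigonometric--hyperbolic consolidation rather than any conceptual step: assembling the raw quaternionic products into the stated vectors --- in particular collecting the normal into the single compact factor $\hat\omega=\omega(i\tanh\chi+(\cosh t\cot\alpha-\csc\alpha\sinh t))$ and verifying that all claimed components are real --- requires careful handling of many interlocking $\delta_i$, $\alpha$, and $t$ identities, which is exactly why the authors describe the derivation as a mixture of hand and symbolic computation. To keep this manageable I would first carry out the simplification at $t=0$ (where the scalar factor reduces to $\sin\alpha$ and the degenerate line of \eqref{eq:straightLine} reappears), and then reintroduce the spectral parameter, tracking its effect as a single M\"obius factor so that the general-$t$ formulas follow from the $t=0$ case with controlled modifications.
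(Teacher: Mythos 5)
Your proposal is correct and follows essentially the same route as the paper, which likewise obtains the result by solving the recursion \eqref{eq:singleBTEvolution} in closed form (yielding \eqref{eq:singleBTRecursion}) and then substituting into \eqref{eq:singleBTImmGauss} via a mixture of hand and symbolic computation. Your explicit identification of the constant commuting matrices $L_0,M_0$ and their simultaneous diagonalization is a useful concretization of what the paper leaves implicit.
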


\begin{remark}
In the most general case where the parameter line functions $\delta_1(k)$ and $\delta_2(\ell)$ vary, we have (up to reversing summation/product indices based on the signs of $k,\ell$):
\begin{equation}
\begin{split}
\chi(k,\ell,\alpha) &=
\log \tan \frac{\theta}{2} +
\sum _{s=0}^{k-1} \log \frac{\sin \alpha +\sin \delta _1(s)}{\sin \alpha -\sin \delta _1(s)} -
\sum _{s=0}^{\ell-1} \log \frac{\sin \left(\alpha -\delta _2(s)\right)}{\sin \left(\alpha +\delta _2(s)\right)}, \\
\omega(k,\ell,t) &= \prod _{s=0}^{k-1} \frac{i + \sinh t \sin \delta
   _1(s)}{i -\sinh t \sin \delta _1(s)}
   \prod
   _{s=0}^{\ell-1} \frac{i + \cosh t \tan \delta _2(s)}{i -\cosh t \tan
   \delta _2(s)}, \quad \mathrm{and}\\
x(k,\ell,t) &=\sum _{s=0}^{k-1} \frac{\cosh t \sin \delta
   _1(s)}{1+\sinh ^2 t \sin ^2\delta _1(s)}
   +
   \sum _{s=0}^{\ell-1}
   \frac{\sinh t \sin \delta _2(s) \cos \delta
   _2(s)}{1+\sinh ^2 t \sin ^2\delta _2(s)}.
\end{split}
\end{equation}
\end{remark}

We wish to highlight two special cases of the above theorem; the first provide a discrete analogue of Dini's surfaces in curvature line coordinates given by a family of B\"acklund transformations, while the latter provides a discrete analogue of Dini's surfaces in more general coordinates given by an associated family (see Figure~\ref{fig:DiniCompare}). 
\begin{figure}[t]
  \centering
  \includegraphics[width=.5\hsize]{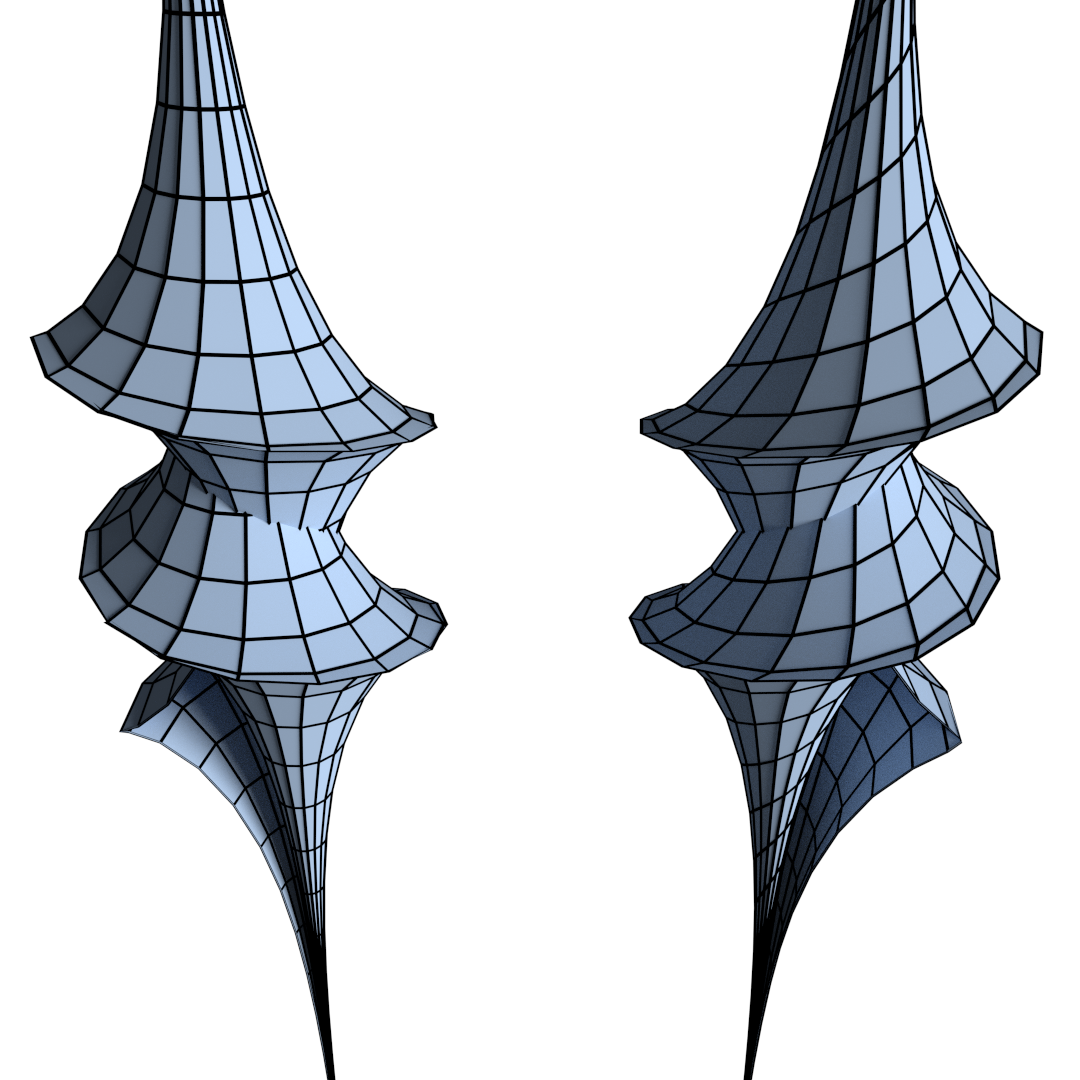}
  \caption{Two Dini nets. Note the subtle difference in how the cusp line aligns with the parameter polygons in the right version but not in the left.}
  \label{fig:DiniCompare}
\end{figure}
\begin{corollary}
Setting $t = 0$ in Theorem \ref{thm:singleBT} yields the B\"acklund transformations of the straight line that are all cK-nets. 
\begin{equation}
\begin{split}
f^{cK}_\text{dini}(k,\ell) &= \begin{bmatrix}-2 k \sin\delta_1\\0\\0\end{bmatrix} \, + \sin\alpha \begin{bmatrix} \tanh\chi \\ -\sech\chi \cos(2 \ell \delta_2) \\ - \sech \chi \sin(2 \ell \delta_2) \end{bmatrix} \quad \mathrm{with} \\
n^{cK}_\text{dini}(k,\ell)&= \begin{bmatrix} \sech \chi \sin\alpha \\
			\tanh\chi \cos(2 \ell \delta_2) \sin\alpha -\cos\alpha \sin(2 \ell \delta_2)\\
			\tanh\chi \sin(2 \ell \delta_2) \sin \alpha + \cos \alpha \cos(2 \ell \delta_2) \end{bmatrix}.
\end{split}
\end{equation}
\end{corollary}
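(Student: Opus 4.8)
The plan is to derive the two displayed formulas by directly specializing Theorem~\ref{thm:singleBT} to $t=0$, and then to invoke Theorem~\ref{thm:ckFromLax} for the geometric claim. First I would observe that the hyperbolic prefactor collapses: since $\cosh 0 = 1$ and $\sinh 0 = 0$, the scalar $\frac{\sin\alpha}{\cosh t - \cos\alpha\sinh t}$ multiplying both column vectors becomes simply $\sin\alpha$, and $f_0(k,\ell,0)$ reduces to the degenerate form $(-2k\sin\delta_1,0,0)^{\mathrm{tr}}$ already recorded after \eqref{eq:straightLine}. The function $\chi$ of \eqref{eq:singleBTRecursion} carries no $t$-dependence, so it is unaffected by the specialization.

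The key computation is the evaluation of $\omega$ at $t=0$. Setting $\sinh t = 0$ makes the first factor of $\omega$ in \eqref{eq:straightLine} equal to $(i/i)^k = 1$, while the second becomes $\left(\frac{i+\tan\delta_2}{i-\tan\delta_2}\right)^\ell$. Clearing the tangent by multiplying numerator and denominator by $\cos\delta_2$ and recognizing $i\cos\delta_2 \pm \sin\delta_2 = i\,e^{\mp i\delta_2}$, I would simplify the base to $e^{-2i\delta_2}$, so that $\omega(k,\ell,0) = e^{-2i\ell\delta_2}$. Hence $\Re\omega = \cos(2\ell\delta_2)$ and $\Im\omega = -\sin(2\ell\delta_2)$, and substituting these together with the prefactor $\sin\alpha$ into the $f_b$ column of Theorem~\ref{thm:singleBT} reproduces the claimed $f^{cK}_\text{dini}$ verbatim.

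For the Gau\ss{} map I would evaluate $\hat\omega = \omega\bigl(i\tanh\chi + (\cosh t\cot\alpha - \csc\alpha\sinh t)\bigr)$ at $t=0$, where it becomes $\hat\omega = e^{-2i\ell\delta_2}(i\tanh\chi + \cot\alpha)$. Expanding the product and separating real and imaginary parts gives $\Re\hat\omega = \cos(2\ell\delta_2)\cot\alpha + \sin(2\ell\delta_2)\tanh\chi$ and $\Im\hat\omega = \cos(2\ell\delta_2)\tanh\chi - \sin(2\ell\delta_2)\cot\alpha$. Multiplying the $n_b$ column through by $\sin\alpha$ and using the identity $\sin\alpha\cot\alpha = \cos\alpha$ then yields exactly the three components of $n^{cK}_\text{dini}$.

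Finally, the assertion that these nets are cK-nets requires no new work: $t=0$ corresponds to the spectral value $\lambda = e^{0} = 1$, at which Theorem~\ref{thm:ckFromLax} guarantees that the net produced by the Lax system is circular with $K=-1$ (and \eqref{eq:singleBTImmGauss} is again the Sym formula applied to a frame premultiplied by a K-net matrix, so it stays within this class). I do not anticipate a genuine obstacle, since the statement is essentially a specialization; the only care needed is the bookkeeping, namely the simplification $\frac{i+\tan\delta_2}{i-\tan\delta_2} = e^{-2i\delta_2}$, the sign of $\Im\omega$, and correctly tracking the real and imaginary parts of $\hat\omega$ through the final scaling by $\sin\alpha$.
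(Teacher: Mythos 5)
Your proposal is correct and matches the paper's treatment: the corollary is stated as an immediate specialization of Theorem~\ref{thm:singleBT} at $t=0$, and your bookkeeping (the prefactor collapsing to $\sin\alpha$, $\omega(k,\ell,0)=e^{-2i\ell\delta_2}$, the real/imaginary parts of $\hat\omega$, and the appeal to $\lambda=1$ for circularity) is exactly what is needed. No discrepancy with the paper's (implicit) argument.
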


\begin{corollary}
Setting $\alpha = -\frac\pi2$ in Theorem \ref{thm:singleBT} yields the associated family of the Beltrami pseudosphere of revolution, see Figure \ref{fig:Pseudo}.
\begin{equation}
\label{eq:pseudoAssocFam}
\begin{split}
f_\text{pseudo}(k,\ell,t) &= f_0(k, \ell, t) - \sech t  \begin{bmatrix} \tanh (k \tau)\\ -\sech(k \tau) \Re \omega \\ \sech(k \tau) \Im \omega \end{bmatrix} \quad \mathrm{with} \\
n_\text{pseudo}(k,\ell,t) &= -\sech t \begin{bmatrix} \sech\chi \\ \tanh\chi \Re\omega + \sinh t \Im \omega \\ \tanh \chi \Im\omega - \sinh t\Re\omega \end{bmatrix},
\end{split}
\end{equation}
where $\tau = \log{\frac{1-\sin\delta_1}{1+\sin\delta_1}}$.
\end{corollary}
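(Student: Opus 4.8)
The plan is to prove the corollary by directly specializing the closed-form expressions of Theorem \ref{thm:singleBT} to $\alpha = -\tfrac{\pi}{2}$, so that no new geometric input is needed beyond elementary trigonometric simplifications. First I would record the values $\sin\alpha = -1$, $\cos\alpha = 0$, $\cot\alpha = 0$, and $\csc\alpha = -1$. Substituting these into the common prefactor $\tfrac{\sin\alpha}{\cosh t - \cos\alpha\sinh t}$ collapses it to $-\sech t$, which already matches the scalar multiplying both vectors in \eqref{eq:pseudoAssocFam}.

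Next I would simplify the argument $\chi$ from \eqref{eq:singleBTRecursion}. The $k$-coefficient $\log\tfrac{\sin\alpha+\sin\delta_1}{\sin\alpha-\sin\delta_1}$ becomes $\log\tfrac{1-\sin\delta_1}{1+\sin\delta_1} = \tau$. The crucial observation is that the $\ell$-coefficient vanishes: since $\sin(-\tfrac{\pi}{2}-\delta_2) = \sin(-\tfrac{\pi}{2}+\delta_2) = -\cos\delta_2$, the ratio $\tfrac{\sin(\alpha-\delta_2)}{\sin(\alpha+\delta_2)}$ equals $1$ and its logarithm is $0$. Hence $\chi$ loses all $\ell$-dependence and reduces to $\log\tan\tfrac{\theta}{2} + k\tau$; taking the symmetric initial normal $\theta = \tfrac{\pi}{2}$ gives $\chi = k\tau$, so that $\tanh\chi$ and $\sech\chi$ become $\tanh(k\tau)$ and $\sech(k\tau)$ as in the immersion formula. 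This vanishing is exactly what produces a surface of revolution. For the Gau\ss{} map I would simplify $\hat\omega = \omega\bigl(i\tanh\chi + (\cosh t\cot\alpha - \csc\alpha\sinh t)\bigr)$ to $\hat\omega = \omega(i\tanh\chi + \sinh t)$ and expand into real and imaginary parts: writing $\omega = \Re\omega + i\,\Im\omega$ yields $\Im\hat\omega = \Re\omega\,\tanh\chi + \Im\omega\,\sinh t$ and $\Re\hat\omega = \Re\omega\,\sinh t - \Im\omega\,\tanh\chi$, which (up to the overall orientation sign) reproduces the second and third components of $n_\text{pseudo}$ in \eqref{eq:pseudoAssocFam}.

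Finally, to justify the name, I would verify consistency at $t = 0$: there $\omega(k,\ell,0) = e^{-2i\ell\delta_2}$ and $f_0$ collapses to the vector $(-2k\sin\delta_1,0,0)$, and the formulas reduce to the discrete pseudosphere of Remark \ref{rem:pseudoFormulas} under the identification $\epsilon = -2\sin\delta_1$ (note $\tfrac{2+\epsilon}{2-\epsilon} = \tfrac{1-\sin\delta_1}{1+\sin\delta_1}$, so the two definitions of $\tau$ agree) and $\phi = 2\delta_2$. As a final sanity check I would confirm $\|n_\text{pseudo}\| = 1$ using that $|\omega| = 1$, each factor of $\omega$ having the form $\tfrac{i+a}{i-a}$ with $a$ real, since then $\sech^2\chi + (\tanh^2\chi + \sinh^2 t)|\omega|^2 = \cosh^2 t$ cancels the $\sech^2 t$ prefactor. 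I do not expect a genuine obstacle here: the entire argument is a specialization of an already-proven formula, and the only real subtlety is recognizing that the $\ell$-term in $\chi$ drops out precisely at $\alpha = -\tfrac{\pi}{2}$, which is both the mechanism behind the rotational symmetry and the reason this particular B\"acklund parameter singles out the pseudosphere.
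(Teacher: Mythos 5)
Your proposal is correct and is essentially the paper's (implicit) argument: the corollary is stated without proof precisely because it is the direct specialization $\alpha=-\tfrac{\pi}{2}$ of Theorem \ref{thm:singleBT}, and you carry out exactly the substitutions that matter — the prefactor collapsing to $-\sech t$, the $\ell$-term of $\chi$ vanishing so that $\chi=k\tau$ (with $\theta=\tfrac{\pi}{2}$), and $\hat\omega=\omega(i\tanh\chi+\sinh t)$. The only caveat is that your phrase ``up to the overall orientation sign'' papers over the fact that your computed $\Re\hat\omega=\Re\omega\sinh t-\Im\omega\tanh\chi$ differs from the printed third component of $n_\text{pseudo}$ by a sign while the other components agree; this is a typo-level inconsistency in the stated formula rather than a gap in your argument, and your $t=0$ cross-check against Remark \ref{rem:pseudoFormulas} is the right way to resolve it.
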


\begin{remark}
It is clear that either setting $\alpha = -\frac\pi2$ in the cK-net family of Dini's surfaces or setting $t=0$ in the associated family of the Pseudosphere we recover, after the change of variables $\epsilon = -2\sin\delta_1$ and $\phi = 2 \delta_2$, the closed form of the Pseudosphere given in Remark \ref{rem:pseudoFormulas}.
\end{remark}

\subsubsection{Special double transformations of the straight line}
In this section we give closed form expressions for double B\"acklund transformations of the straight line with opposite (real or complex) parameters $\alpha$ and $-\alpha$, together with their associated families. Throughout we assume that $\alpha = \arcsin{d}$ for $d \geq 1$, so in particular, we can define $\mu$ by $e^{i \mu} = \tan\frac\alpha 2$.

Such transformations are given by multiplying the straight line frame by a cK-net Lax matrix with unitary vertex variables $s_0, s_{db}$ and an edge variable $s_b$ that is only unitary for $e^{i \mu} = \pm 1$

\begin{equation}
B = \quadmatrix{s_b (e^{i \mu} s_{db} + \frac{1}{e^{i \mu}s_0})}{i (\lambda - \frac{s_{db}s_0}{\lambda})}{i (\lambda - \frac{1}{\lambda s_{db}s_0})}{\frac{1}{s_b}(\frac{e^{i \mu}}{s_{db}} + \frac{s_0}{e^{i \mu}})}.
\end{equation}

Given the Lax matrices for the straight line and the single B\"acklund transformation variable $s_b(k,\ell)$ (which becomes the edge quantity here), we can solve the recurrence relations \eqref{eq:LaxEvolution} governing the evolution variable $s_{db}$. For simplicity we assume that $s_b(0,0) = i$, $s_{db}(0,0) = 1$, and that $\delta_1, \delta_2$ are constant. For $e^{i \mu} \neq \pm 1$ we find

\begin{equation}
\begin{split}
s_0(k,\ell) &= (-1)^\ell,\\
s_b(k,\ell) &= (-1)^\ell \left(-1 + \frac{2}{1 - i e^{-(i \ell \kappa + k \tau)}}\right), \quad \mathrm{and} \\
s_{db}(k,\ell) &= (-1)^\ell \left( -1 + \frac{2}{1 - i (\cot\mu \sech(k \tau) \sin(\ell \kappa))}\right),
\end{split}
\end{equation}
where $\kappa = 2 \arctan(\sin\mu \tan \delta_2)$ and $\tau = \log\frac{1-\sin\delta_1 \cos\mu}{1+\sin\delta_1\cos\mu}$. Note that we performed a change of variables to split $\chi$ into a $\kappa$ and $\tau$ part.

\begin{theorem}
The immersion and Gau{\ss} map for the stationary breather with B\"acklund parameter $\mu \neq 0,\pi \in [0, 2\pi)$ and associated family parameter $\lambda = e^t$ of the straight line (with $f_0, \omega$ as in \eqref{eq:straightLine}) are given by
\begin{equation}
\begin{split}
&f_{breather}(k,\ell,\mu,t) = f_0(k,\ell,t) + \\
&\quad 2 A \begin{bmatrix}
\cos \mu \sinh t \sech(k \tau ) \sin (\ell \kappa) \cos (\ell \kappa)-\sin \mu \cosh t \sinh (k \tau )\\
\Im\omega \sin (\ell \kappa)-\Re\omega (\cos \mu \sinh t \tanh (k \tau ) \sin (\ell \kappa)+\sin \mu \cosh t \cos (\ell \kappa))\\
\Im\omega (\cos \mu \sinh t \tanh (k \tau ) \sin (\ell \kappa)+\sin \mu \cosh t \cos (\ell \kappa))+\Re\omega \sin (\ell \kappa)
\end{bmatrix}
\end{split}
\end{equation}
and
\begin{equation}
\begin{split}
&n_{breather}(k,\ell,\mu,t) = \\
&\quad \frac {A} {2 \cosh(k \tau)} \begin{bmatrix}
4 (\sin \mu \sinh t \cosh (k \tau ) \cos (\ell \kappa)+\cos \mu \cosh t \sinh (k \tau ) \sin (\ell \kappa)) \\
\Re\omega B - \Im\omega C\\
-\Re\omega C - \Im\omega B,
\end{bmatrix}
\end{split}
\end{equation}
where
\begin{equation}
\begin{split}
A &=\frac{\sin(2\mu) \cosh(k \tau)}{(\cos(2\mu) + \cosh(2 t))(\cos^2\mu \sin^2(\ell \kappa) + \cosh^2(k \tau) \sin^2\mu)}, \\
B &= 2 (\cos \mu \cosh t \sin (2 \ell \kappa)-\sin \mu \sinh t \sinh (2 k \tau )) \quad \mathrm{and} \\
C &= \sin ^2(\ell \kappa) (\sin (2 \mu )+\cot \mu (\cosh (2 t)+1)) \\ & \quad \quad -\cosh ^2(k \tau ) (\sin (2 \mu )+\tan \mu (1-\cosh (2 t))).
\end{split}
\end{equation}
\end{theorem}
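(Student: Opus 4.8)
The plan is to realize the double B\"acklund transform as left-multiplication of the straight-line frame by the matrix $B$ and then to feed the product through the Sym-Bobenko formula \eqref{eq:Sym}. Concretely, write $\Phi_{db} = B\Phi_0$, where $\Phi_0$ is the vacuum frame. Since the vacuum data $s_0 = l_0 = m_0 = (-1)^\ell$ makes the Lax matrices $L_0, M_0$ essentially constant-coefficient (the $(-1)^\ell$ sign is the only lattice dependence), the system $\Phi_1 = L_0\Phi$, $\Phi_2 = M_0\Phi$ integrates in closed form: $\Phi_0$ is a product of commuting elementary rotations captured entirely by the quantities $\omega$ and $x$ already appearing in \eqref{eq:straightLine}. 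I would record this explicit $\Phi_0$ first, and then substitute the already-solved closed forms for $s_0, s_b, s_{db}$ (the recurrences \eqref{eq:LaxEvolution} solved just before the statement) into $B$, so that $B$ becomes an explicit $2\times2$ matrix in $k,\ell,\mu$ and $\lambda = e^t$, expressed through $\kappa = 2\arctan(\sin\mu\tan\delta_2)$ and $\tau = \log\frac{1-\sin\delta_1\cos\mu}{1+\sin\delta_1\cos\mu}$.

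The structural heart of the computation is to expand Sym-Bobenko using the product rule. Because conjugation and the $t$-derivative distribute over $\Phi_{db} = B\Phi_0$, one gets
\[
n_{db} = -i\,\Phi_0^{-1}\bigl(B^{-1}\sigma_3 B\bigr)\Phi_0, \qquad f_{db} = f_0 + 2\Bigl[\Phi_0^{-1}\bigl(B^{-1}\partial_t B\bigr)\Phi_0\Bigr]^{\mathrm{tr}=0}.
\]
Thus I only need the two trace-free quaternions $B^{-1}\sigma_3 B$ and $B^{-1}\partial_t B$, each an explicit $2\times2$ matrix, and then conjugate by the known $\Phi_0$. The crucial simplification is that $\Phi_0$ encodes a rotation about the $x$-axis by $\arg\omega$ --- this is exactly why $n_0 = (0,\Im\omega,\Re\omega)^{\mathrm T}$ and $f_0$ lies on the $x$-axis. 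Hence conjugation by $\Phi_0$ fixes the $x$-component of an imaginary quaternion and rotates its $(y,z)$-part by $\arg\omega$, mixing the two slots into combinations of $\Re\omega$ and $\Im\omega$.

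This geometric observation is what makes the claimed formulas plausible before any grinding: the first entries of $n_{breather}$ and of $f_{breather}-f_0$ must be $\omega$-free (they sit on the rotation axis), while the remaining two entries must be precisely the $\arg\omega$-rotated $(y,z)$-components, which is the shape displayed in the theorem. So I would compute the axis-component and the planar $(\nu_2,\nu_3)$-component of $B^{-1}\sigma_3 B$ and of $B^{-1}\partial_t B$ separately, and then read off the three entries of $n_{breather}$ and $f_{breather}$ by applying the planar rotation.

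Finally, I would collect terms and reduce the resulting rational trigonometric/hyperbolic expressions to the compact auxiliary quantities $A, B, C$. I expect this last reduction to be the main obstacle: in particular, exhibiting the common denominator as the product $(\cos 2\mu + \cosh 2t)(\cos^2\mu\sin^2(\ell\kappa) + \cosh^2(k\tau)\sin^2\mu)$ and recognizing the numerators as exactly $B$ and $C$ amounts to a substantial but routine cascade of double-angle and product-to-sum identities. It is most safely carried out with computer algebra --- as the authors note, the derivation mixes hand and symbolic computation --- with the decomposition above dictating which terms should survive in each slot and thereby serving as a strong consistency check on the symbolic output.
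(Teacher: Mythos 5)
Your proposal is correct and follows essentially the same route as the paper: left-multiply the vacuum frame by the Lax matrix $B$ built from the solved recurrence data $s_0, s_b, s_{db}$, and push the product through the Sym--Bobenko formula, with the final reduction to $A$, $B$, $C$ done symbolically. The product-rule decomposition into $B^{-1}\sigma_3 B$ and $B^{-1}\partial_t B$ conjugated by $\Phi_0$, together with the observation that $\Phi_0$ acts as a rotation about the $x$-axis by $\arg\omega$ (forcing the first components to be $\omega$-free and the other two to mix as $\Re\omega$, $\Im\omega$), is a sound and useful organization of exactly that computation.
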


\begin{remark}
When $t=0$ the resulting cK-net immersions agree with those found by Schief \cite{Schief:2003ug}. As he notes, for each rational number $0<q<1$, setting $\mu =  -\arcsin(\cot\delta_2\tan(\delta_2 q)$ generates a surface that is closed in one lattice direction. The breather cK-net with $q = \frac 3 5 $ is shown in Figure \ref{fig:complexBT}.
\end{remark}

Naively setting $\mu = 0$ in the previous theorem does not yield Kuen's surface, however taking the limit as $\mu \to 0$ does. Alternatively, one could solve the recursion formulas for real $\tan\frac\alpha2 = 1$. In the following theorem we have $\tau = \log\frac{1-\sin\delta_1}{1+\sin\delta_1}$.

\begin{theorem}
Kuen's surface and its associated family $\lambda = e^t$, as a double transformation of the straight line, are given (with $f_0, \omega$ as in \eqref{eq:straightLine}) by
\begin{equation}
\begin{split}
&f_{Kuen}(k,\ell,t) = f_0(k,\ell,t) + \\
&\frac{2 \cosh(k \tau) \sech t}{\cosh^2(k \tau) + 4 \ell^2 \tan^2\delta_2}
\begin{bmatrix}
2 \ell \tan \delta_2 \tanh t \sech(k \tau )-\sinh (k \tau )\\
2 \ell \tan \delta_2 \Im\omega \sech t-\Re\omega \left(2 \ell \tan \delta_2 \tanh t \tanh (k \tau )+1\right)\\
\Im\omega \left(2 \ell \tan \delta_2 \tanh t \tanh (k \tau )+1\right)+2 \ell \tan \delta_2 \Re\omega \sech t
\end{bmatrix}
\end{split}
\end{equation}
with Gauss map
\begin{equation}
\begin{split}
&n_{Kuen}(k,\ell,t) = f_0(k,\ell,t) + \\ 
&\quad \frac{\sech^2 t}{\cosh^2(k \tau) + 4 \ell^2 \tan^2\delta_2}
\begin{bmatrix}
2 \left(2 l \tan \delta_2 \cosh t \sinh (k \tau )+\sinh t \cosh (k \tau )\right)\\
\Im\omega D + \Re\omega E\\
\Re\omega D - \Im\omega E,
\end{bmatrix}
\end{split}
\end{equation}
where
\begin{equation}
\begin{split}
D &= \left(1-\sinh ^2t\right) \cosh ^2(k \tau )-4 l^2 \tan ^2\delta_2 \cosh ^2t \quad \mathrm{and} \\
E &= 4 l \tan \delta_2 \cosh t-\sinh t \sinh (2 k \tau ).
\end{split}
\end{equation}
\end{theorem}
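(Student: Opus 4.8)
The plan is to obtain these formulas as the $\mu \to 0$ limit of the preceding (breather) theorem rather than by direct substitution, exactly as flagged in the remark just above the statement: at $e^{i\mu}=1$ the closed forms for $s_b$ and $s_{db}$ degenerate because the oscillatory dependence on $\ell$ collapses to linear growth, so instead of re-solving \eqref{eq:LaxEvolution} at the degenerate parameter I would start from the already-established breather immersion and Gau{\ss} map and perform a careful asymptotic expansion in $\mu$. First I would record the leading-order behavior of the auxiliary quantities as $\mu\to 0$. Since $\kappa = 2\arctan(\sin\mu\tan\delta_2) = 2\mu\tan\delta_2 + O(\mu^3)$, one has $\sin(\ell\kappa) = 2\ell\mu\tan\delta_2 + O(\mu^3)$ and $\cos(\ell\kappa)=1+O(\mu^2)$, while $\tau = \log\frac{1-\sin\delta_1\cos\mu}{1+\sin\delta_1\cos\mu} \to \log\frac{1-\sin\delta_1}{1+\sin\delta_1}$, matching the $\tau$ in the statement; crucially $\omega$ is independent of $\mu$, so its limit is trivial.

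The crux is that the common prefactor $A$ is unbounded. From $\sin(2\mu)=2\mu+O(\mu^3)$ and $\cos^2\mu\,\sin^2(\ell\kappa)+\cosh^2(k\tau)\sin^2\mu = \mu^2\bigl(4\ell^2\tan^2\delta_2+\cosh^2(k\tau)\bigr)+O(\mu^4)$, together with $\cos(2\mu)+\cosh(2t)\to 2\cosh^2 t$, one finds $A = \frac{1}{\mu}\cdot\frac{\cosh(k\tau)}{\cosh^2 t\,\bigl(\cosh^2(k\tau)+4\ell^2\tan^2\delta_2\bigr)} + O(\mu)$, so $A$ diverges like $1/\mu$. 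This is compensated because every surviving entry of the bracketed vectors in $f_{breather}$ and $n_{breather}$ vanishes to first order in $\mu$: each term carries either an explicit $\sin\mu$ or a lone $\sin(\ell\kappa)$, both $O(\mu)$. I would therefore expand each vector component to first order, cancel the factor $\mu$ against the $1/\mu$ in $A$, and collect the finite limit. The replacement $\sin(\ell\kappa)\mapsto 2\ell\mu\tan\delta_2$ is precisely what turns the trigonometric breather entries into the polynomial $2\ell\tan\delta_2$ terms appearing in $f_{Kuen}$ and $n_{Kuen}$; factoring a $\cosh t$ (respectively $\cosh^2 t$) out of the surviving bracket converts the residual $\sinh t,\cosh t$ combinations into the $\tanh t,\sech t$ normalization displayed in the statement, and repeating the same expansion for $B$ and $C$ produces the quantities $D$ and $E$.

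The main obstacle is exactly this $\infty\cdot 0$ bookkeeping: one must retain the $O(\mu)$ coefficient of \emph{every} component throughout the expansion, and in particular distinguish the components carrying a lone $\sin(\ell\kappa)$ from those carrying $\sin\mu$, since both contribute at the same order but enter the final vector differently. A useful internal check at each stage is that $\|n_{Kuen}\|=1$ and that setting $t=0$ reproduces Schief's circular Kuen net, and I would verify both. Finally, that the limit genuinely is Kuen's surface -- a double B\"acklund transform of the line with parameters $\pm\tfrac{\pi}{2}$ -- rather than a formula artifact follows from continuity of the B\"acklund construction in $\alpha$ together with Theorem~\ref{thm:ckFromLax}, which guarantees the limiting net still has constant Gau{\ss} curvature $K=-1$ on every quad. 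As an independent confirmation one could bypass the limit entirely by solving the recursion \eqref{eq:LaxEvolution} directly at $\tan\tfrac{\alpha}{2}=1$ using a linear-in-$\ell$ ansatz for $s_{db}$ (the linearization that the naive substitution misses) and feeding the result through \eqref{eq:singleBTImmGauss}.
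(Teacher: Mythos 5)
Your proposal is correct and follows exactly the route the paper indicates: the paper gives no explicit proof beyond the remark preceding the theorem, which states that the formulas arise as the $\mu\to 0$ limit of the breather solution (or alternatively by re-solving the recursion at $\tan\frac{\alpha}{2}=1$), and your asymptotic bookkeeping --- $A\sim\mu^{-1}$ against the $O(\mu)$ vanishing of every vector entry, with $\sin(\ell\kappa)\mapsto 2\ell\mu\tan\delta_2$ producing the polynomial $\ell$-dependence --- is precisely that computation carried out. The checks you propose ($\|n_{Kuen}\|=1$, recovery of the circular Kuen net at $t=0$) match the paper's own consistency remarks.
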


\begin{remark}
When $t=0$ we recover a cK-net Kuen's surface, as shown in Figure \ref{fig:Kuen}.
\end{remark}

\bibliography{ckbib}

\end{document}